\documentclass[11pt]{amsart}
\usepackage{CJK}
\usepackage{indentfirst}
\usepackage{amscd}
\usepackage{amsmath, amssymb}
\usepackage{amsfonts}
\usepackage{enumerate}
\usepackage{color}

\newcommand{\ddbar}{\sqrt{-1} \partial \overline{\partial}}
\newcommand{\ol}{\overline}

\newcommand{\ov}[1]{\overline{#1}}
\newcommand{\de}{\partial}
\newcommand{\dbar}{\overline{\partial}}
\newcommand{\omu}{\omega_{\underline{u}}}
\newcommand{\ti}{\tilde}

\newtheorem*{claim*}{Claim}

\begin{document}
\newcounter{theor}
\setcounter{theor}{1}
\newtheorem{claim}{Claim}
\newtheorem{theorem}{Theorem}[section]
\newtheorem{lemma}[theorem]{Lemma}
\newtheorem{corollary}[theorem]{Corollary}
\newtheorem{proposition}[theorem]{Proposition}
\newtheorem{prop}{Proposition}[section]
\newtheorem{question}{question}[section]
\newtheorem{defn}{Definition}[section]
\newtheorem{remark}{Remark}[section]

\numberwithin{equation}{section}

\title[Fully non-linear elliptic equations]{A remark for fully non-linear elliptic equations on compact almost Hermitian manifolds}

\author[L. Huang]{Liding Huang}
	\address{School of Mathematical Sciences, Xiamen University, Xiamen 361005, P. R. China}
	\email{huangliding@xmu.edu.cn}

\subjclass[2020]{Primary:  58J05; Secondary: 32Q60, 35J60}

\begin{abstract}
In this paper, we generalize the definition of sub-slope, introduced by Guo-Song, to almost Hermitian manifolds and prove the existence of solutions for  a general class of fully non-linear equations on compact almost Hermitian manifolds. As an application, we solve the complex Hessian quotient equation and the deformed Hermitian-Yang-Mills equation in the almost Hermitian setting.
\end{abstract}
\keywords{
Fully non-linear elliptic equations, Complex Hessian quotient equation, Almost Hermitian manifold.
}
\maketitle

\section{Introduction}\label{introduction}

The fully non-linear elliptic equation plays an important role in complex geometry and has been studied extensively with subsolution \cite{CNS85,CM21,Guan14,Szekelyhidi18,Yau78}. On the other hand, motivated by
differential geometry and mathematical physics, almost Hermitian manifolds have been researched extensively\cite{CHZ,CTW19,HZ,HL15,HZ}. In this paper, we consider the fully nonlinear elliptic equation with gradient terms on almost Hermitian setting. Suppose $\omega$ is  a real $(1,1)$-form on a compact almost Hermitian manifold $(M,\chi,J)$ of real dimension $2n$. For $u\in C^{2}(M)$, we denote
\[
\omega_{u}:= \omega+\ddbar{u}+Z(\partial u)
:= \omega+\frac{1}{2}(dJdu)^{(1,1)}
+Z(\partial u),
\]
where $Z(\partial u)$ denotes a smooth (1,1)-form depending on $\partial u$  linearly
and let $\lambda(u)=(\lambda_{1}(u),\ldots,\lambda_{n}(u))$ be the eigenvalues of $\omega_{u}$ with respect to $\chi$. Consider the following equation
\begin{equation}\label{nonlinear equation}
F(\omega_{u}) := f(\lambda_{1},\cdots,\lambda_{n}) = h,
\end{equation}
where  $h\in C^{\infty}(M)$, $f$ is a smooth symmetric function, For notational convenience, we sometimes denote $F(\omega_u)$, $\lambda_{i}(u)$ by $F(u)$, $\lambda_{i}$ respectively, when no confusion will arise. 

Consider the  open symmetric cone $\Gamma\subsetneq\mathbb{R}^{n}$    with vertex at the origin containing the positive orthant $\Gamma_{n}=\{\lambda\in\mathbb{R}^{n}:\lambda_{i}>0 \ \text{for $i=1,\ldots,n$}\}$.  We assume that $f$ is defined on $
\Gamma$. Furthermore, suppose that
\begin{enumerate}[(i)]\setlength{\itemsep}{1mm}
\item \begin{equation}\label{elliptic}
f_{i}=\frac{\de f}{\de\lambda_{i}}>0
\end{equation}
for all $i$ and $f$ is concave,
\item $\sup_{\partial\Gamma}f<\inf_{M}h$,
\item For any $\sigma<\sup_{\Gamma} f$ and $\lambda\in \Gamma$, we have $\lim_{t\rightarrow \infty}f(t\lambda)>\sigma$,
\end{enumerate}
where
\[
\sup_{\de\Gamma}f = \sup_{\lambda'\in\de\Gamma}\limsup_{\lambda\rightarrow\lambda'}f(\lambda).
\]
Many geometric equations are of the form \eqref{nonlinear equation}, such as complex Monge--Amp\`ere equation  complex Hessian equation, complex Hessian quotient equation  and the Monge--Amp\`ere equation for $(n-1)$-plurisubharmonic functions.
On almost Hermitian manifolds, Chu-Huang-Zhang \cite{CHZ}   proved the $C^{2,\alpha}$-estimates for a class of nonlinear elliptic equations and Huang-Zhang \cite{HZ}  generalized their result to the equation involving gradient terms. Recently, Guo-Song \cite{GS} introduced the sub-slope which can be used to solve a class of fully nonlinear elliptic equation on Hermitian manifolds.  Here we generalized their result to almost Hermitian manifolds. 

    Now we state our main results.
    \begin{theorem}\label{main}
    Let $(M, g)$ be a closed almost Hermitian manifold with real dimension 2n. Then the following are equivalent
    \begin{enumerate}
         \item There exists a smooth solution $u\in \mathcal{E}$ solving the following equation
         \begin{equation}\label{nonlinear equation subslope}
            F(\omega_u)=h+\sigma,
        \end{equation}
        where $\sigma$ are sub-slope defined in Definition \ref{subslope}.
        \item There exists a $\mathcal{C}$-subsolution $\underline{u}\in \mathcal{E}$ for equation \eqref{nonlinear equation subslope}.
\item There exists $u\in \mathcal{E}$ satisfying  $\sigma<\min (F_{\infty}(\omega_u)-h),$
where  $F_{\infty}$ are defined in \eqref{f infty}.
    \end{enumerate}
        
    \end{theorem}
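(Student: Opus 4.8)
The plan is to prove the cycle $(1) \Rightarrow (2) \Rightarrow (3) \Rightarrow (1)$, the first two implications being essentially formal and the last containing all the analysis. For $(1) \Rightarrow (2)$: if $u \in \mathcal{E}$ solves \eqref{nonlinear equation subslope}, then at each point $\lambda(\omega_u)$ lies exactly on the level set $\{f = h + \sigma\}$, so by \eqref{elliptic} (strict monotonicity of $f$ in every variable) the set $\{\lambda \in \Gamma : f(\lambda) = h(x) + \sigma,\ \lambda - \lambda(\omega_u)(x) \in \overline{\Gamma_n}\}$ collapses to the single point $\lambda(\omega_u)(x)$, hence is bounded; this is exactly the definition of a $\mathcal{C}$-subsolution, and $u$ lies in $\mathcal{E}$ by hypothesis. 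For $(2) \Leftrightarrow (3)$: one uses the standard characterization of $\mathcal{C}$-subsolutions (cf. \cite{GS}), namely that $\underline u \in \mathcal{E}$ is a $\mathcal{C}$-subsolution of \eqref{nonlinear equation subslope} iff at each $x$ and for every $i$ one has $\lim_{t\to\infty} f(\lambda(\omega_{\underline u})(x) + t e_i) > h(x) + \sigma$, where $e_i$ is the $i$-th coordinate vector. Since $F_\infty$ in \eqref{f infty} is built as the pointwise minimum over $i$ of these limits, the condition is precisely $\sigma < F_\infty(\omega_{\underline u})(x) - h(x)$ at every $x$, i.e. $\sigma < \min_M(F_\infty(\omega_{\underline u}) - h)$ by compactness of $M$; so $(2)$ and $(3)$ are literally the same statement with $u = \underline u$.

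The substance is $(3) \Rightarrow (1)$, equivalently $(2) \Rightarrow (1)$: given a $\mathcal{C}$-subsolution $\underline u \in \mathcal{E}$, I would produce a smooth solution by the continuity method. I would deform the data, introducing a family $F(\omega^{(t)}_{u}) = h_t + \sigma_t$, $t \in [0,1]$, with $t=1$ the target \eqref{nonlinear equation subslope} and $t=0$ a trivially solvable problem (e.g. $\underline u$ or a constant an honest solution), arranged so that $\underline u$ remains a $\mathcal{C}$-subsolution for every $t$; this is possible because the subsolution condition in $(3)$ is open and convex in the data. Openness of the solvable set follows from the implicit function theorem on Hölder spaces: the linearized operator is second-order elliptic with no zeroth-order term by \eqref{elliptic}, hence Fredholm of index zero, and local solvability follows by the usual device of working modulo constants (or via the normalization implicit in $\mathcal{E}$). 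Closedness requires uniform a priori estimates; the $C^{2,\alpha}$ and higher-order estimates for equations of the form \eqref{nonlinear equation} with gradient terms on almost Hermitian manifolds are furnished by Chu--Huang--Zhang \cite{CHZ} and Huang--Zhang \cite{HZ}, which, given a uniform $\mathcal{C}$-subsolution, reduce everything to a $C^0$ bound (concavity in \eqref{elliptic} feeds the Evans--Krylov step and hypothesis (iii) controls the degeneration of the cone), after which Schauder bootstrapping yields smoothness.

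The hard part is the $C^0$ estimate, together with keeping all constants uniform along the deformation. When $\Gamma \supsetneq \Gamma_n$ there is no pointwise barrier coming from $h$ alone, so the sub-slope must be used essentially: the definition of $\sigma$ in Definition \ref{subslope} is precisely the threshold beyond which no $\mathcal{C}$-subsolution exists, which is what makes \eqref{nonlinear equation subslope} ``balanced'' and, via an ABP-type estimate as in the Hermitian case, produces an $L^\infty$ bound for the normalized solution --- here hypotheses (ii)--(iii) and the strict inequality of $(3)$ enter to extract a uniform constant. I expect the technical friction to be twofold: (a) carrying the $C^0$ argument uniformly along the continuity path as $(\omega^{(t)}, h_t, \sigma_t)$ and the subsolution gap vary; and (b) coping with the non-compactness of $\mathcal{E}$ when $\Gamma \supsetneq \Gamma_n$, which is handled by combining the uniform subsolution with the second-order estimate to confine $\lambda(\omega_u)$ to a fixed compact region of $\Gamma$. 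Once all of $[0,1]$ is seen to be solvable, $t=1$ gives the desired $u \in \mathcal{E}$.
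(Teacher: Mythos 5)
Your logical skeleton is right: $(1)\Rightarrow(2)$ is formal (a solution to $F(\omega_u)=h+\sigma$ is its own $\mathcal{C}$-subsolution since, by \eqref{elliptic}, the set in Definition \ref{sub} with $\underline u=u$ is empty, hence bounded), $(2)\Leftrightarrow(3)$ is exactly Proposition \ref{equivalent} applied to $h+\sigma$, and all the content is in $(3)\Rightarrow(1)$ via a continuity method. But the heart of $(3)\Rightarrow(1)$ is glossed over. You assert the path can be ``arranged so that $\underline u$ remains a $\mathcal{C}$-subsolution \dots because the subsolution condition in (3) is open and convex in the data.'' That is not how it works, because along the natural path the equation is $F(\bar u + \phi_t)=h_t+c_t$ with $c_t$ an \emph{a posteriori} constant forced on you by Fredholm index considerations, not a free parameter; openness of a condition does not control an unknown constant. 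This is precisely where the sub-slope is used essentially, and where the paper does real work: by the infimum characterization of $\sigma$ and hypothesis $(3)$ one can pick $\bar u\in\mathcal{E}$ with $\sigma\le \bar c:=\max_M(F(\bar u)-h)<\min_M(F_\infty(\omega_{\underline u})-h)$, fix a gap $\delta$ with $\bar c+\delta\le\min_M(F_\infty(\omega_{\underline u})-h)$, set $\bar h=F(\bar u)$ and $h_t=(1-t)\bar h+th$, and then prove by the maximum principle at a max of $\phi_t$ that $c_t\le t\bar c$ (Lemma \ref{ct}). Combined with $\bar h\le h+\bar c$ this yields $f_\infty(\lambda(\omega_{\underline u}))-h_t-c_t\ge\delta>0$ uniformly, i.e.\ $\underline u$ stays a $\mathcal{C}$-subsolution (Proposition \ref{subslution t}). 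That chain of estimates is the whole point and is absent from your write-up.

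A second genuine gap: the continuity method, even once closed, only delivers a smooth solution of $F(u)=h+c_1$ for some constant $c_1$; you never argue that $c_1=\sigma$. The paper needs (and proves) a separate lemma: if $u\in\mathcal{E}$ solves $F(u)=h+c$ with $c$ constant, then $c=\sigma$, via the maximum principle applied to $u'-u$ for a near-optimal competitor $u'$ from Definition \ref{subslope}. Without this, you have produced a solution of the wrong equation. Finally, a smaller point: the $C^{2,\alpha}$ a priori bound (given the uniform $\mathcal{C}$-subsolution) is simply cited as Theorem \ref{main estimate} from Huang--Zhang; your excursion into an ABP-type $C^0$ estimate is neither needed nor what the paper does, and risks confusing what is assumed from the literature with what must be proved here.
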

iii
 We now discuss the proof of Theorem \ref{main}. Using Guo-Song's argument \cite{GS}, we prove the subsolution can be preserved in the continuity method  with sub-slope. Recently, Lin \cite{Lin25} also generalize Guo-Song's argument \cite{GS} to almost Hermitian manifolds.   Compared with Guo-Song's and Lin's setup, we modify their definition (see Definition \ref{subslope}) and we do not make the assumption that the right hand is positive and use the assumption (iii) to replace $\lim_{t\rightarrow \infty} f(t \lambda)=\infty$.
 
We can consider the complex quotient Hessian equation. When $(M,\omega)$ is K\"ahler, Song-Weinkove \cite{SW08} solved  the complex Hessian quotient equation for $k=n$, $l=n-1$ (J-equation) and $h$ is constant. Fang-Lai-Ma  
 \cite{FLM11} generalized this result to $k=n$ and general $l$. and Sz\'ekelyhidi \cite{Szekelyhidi18} considered general $k$ and $l$. Sun obtained analogous results when $\omega$ is Hermitian and $h$ is not constant \cite{Sun16,Sun17a,Sun17b} (see also \cite{Li14,GS15}). Guo-Song \cite{GS}  gave a sufficiency and necessary condition for J-equation on Hermitian manifolds, using the sub-slope. When $k=n$ and $1\leq l\leq n-1$, J. Zhang \cite{ZJ21} proved a priori estimates in the almost Hermitian setting. However, there are few results for the existence of the complex Hessian quotient equations in the almost Hermitian seting.
  
Since the complex quotient Hessian equation satisfies condition (i)-(iii), using Theorem \ref{main}, we can solve the complex quotient Hessian equation. For the definitions of $k$-positivity and $\Gamma_{k}(M,\chi)$, we refer the reader to Subsection \ref{cone}.

\begin{theorem}\label{complex Hessian equation}
Let $(M,\chi,J)$ be a compact almost Hermitian manifold of real dimension $2n$ and $\omega$ be a smooth $k$-positive real $(1,1)$-form. Assume h is a smooth function. Suppose there exists a $\mathcal{C}$-subsolution for equations
\begin{equation}\label{complex Hessian equation 1}
\begin{cases}
\ \omega_{u}^{k}\wedge \chi^{n-k}=e^{h+\sigma}\omega_{u}^{l}\wedge \chi^{n-l}, \\[1mm]
\ \omega_{u}\in\Gamma_{k}(M,\chi), \\[0.5mm]
\ \sup_{M}u = 0,
\end{cases}
\end{equation}
where  $1\leq l <k\leq n$. Then there exists a unique solution $u\in C^{\infty}(M)$ for equation \eqref{complex Hessian equation 1}. Here  $\sigma$ are defined by
\begin{equation}
\log \sigma=\min_{\omega_{u'}\in \Gamma_k}\max_{M}\{\log(\frac{\omega_{u'}^{k}\wedge \chi^{n-k}}{\omega_{u'}^{l}\wedge \chi^{n-l}})- h\}.
\end{equation}
\end{theorem}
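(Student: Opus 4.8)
The plan is to deduce Theorem \ref{complex Hessian equation} from Theorem \ref{main} by checking that the complex Hessian quotient operator fits the abstract framework. Writing $\omega_u$ for the perturbed form (here the gradient term $Z(\partial u)$ is taken to be zero, so $\omega_u = \omega + \ddbar u$), set $f(\lambda) = \log\bigl( S_k(\lambda)/S_l(\lambda) \bigr)$ on the cone $\Gamma = \Gamma_k$, where $S_k$ is the $k$-th elementary symmetric polynomial. Then the first equation in \eqref{complex Hessian equation 1} is exactly $f(\lambda(\omega_u)) = h + \log\sigma$, i.e. equation \eqref{nonlinear equation subslope} with right-hand side $h$ replaced by $h$ and the constant $\log\sigma$ playing the role of the sub-slope; one must verify that the quantity $\log\sigma$ defined in the statement really is the sub-slope of Definition \ref{subslope} for this $f$, which is the content of the characterization in item (3) of Theorem \ref{main} (the min-max formula for $\sigma$ matches $\min_{u'} \max_M (F_\infty(\omega_{u'}) - h)$ once we identify $F_\infty$ for the quotient operator).

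The main verification, carried out first, is that $f = \log(S_k/S_l)$ satisfies hypotheses (i)--(iii). Concavity of $\log(S_k/S_l)$ on $\Gamma_k$ and the positivity $f_i > 0$ are classical (this is the standard input used by Song--Weinkove, Fang--Lai--Ma and Sz\'ekelyhidi); I would simply cite it. For (ii), on $\partial \Gamma_k$ we have $S_k \to 0$ while $S_l$ stays bounded away from $0$ (since $l < k$ and the $\Gamma_k$-cone condition forces $S_1, \dots, S_l > 0$ with a definite lower bound along any approach to the boundary where $S_k \to 0$), so $f \to -\infty$ on $\partial\Gamma_k$; hence $\sup_{\partial\Gamma_k} f = -\infty < \inf_M(h + \log\sigma)$ trivially. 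For (iii): along a ray $t\lambda$ with $\lambda \in \Gamma_k$, $f(t\lambda) = \log\bigl( t^{k-l} S_k(\lambda)/S_l(\lambda) \bigr) \to +\infty$ as $t \to \infty$ because $k > l$, so the limit exceeds any $\sigma < \sup_{\Gamma_k} f$; note this is exactly the place where the paper's weaker hypothesis (iii) is used in lieu of $\lim_{t\to\infty} f(t\lambda) = \infty$, though here the stronger statement in fact holds.

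Next I would address the existence of a $\mathcal{C}$-subsolution, which is assumed in the hypothesis, so there is nothing to prove there; by Theorem \ref{main} (equivalence of (1) and (2)) we obtain a smooth solution $u \in \mathcal{E}$ of \eqref{nonlinear equation subslope} for this $f$ and this $\sigma$, i.e. a smooth function with $\omega_u \in \Gamma_k(M,\chi)$ solving the PDE. The normalization $\sup_M u = 0$ is achieved by adding a constant, which does not affect $\ddbar u$ and hence preserves the equation.

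Finally, uniqueness: if $u_1, u_2$ are two normalized solutions, then at a point where $u_1 - u_2$ attains its maximum the standard comparison/maximum-principle argument for concave elliptic operators (using $f_i > 0$ and concavity, exactly as in the Hermitian case treated by Sz\'ekelyhidi and by Guo--Song) forces $\ddbar(u_1 - u_2) \le 0$ there and, combined with the strict monotonicity, gives $\lambda(\omega_{u_1}) = \lambda(\omega_{u_2})$ at that point, yielding $u_1 \equiv u_2$ after propagating via the strong maximum principle; since both are normalized to $\sup_M u = 0$ the constant is pinned down. The only mild subtlety is that the ambient manifold is only almost Hermitian rather than Hermitian, so $d$ does not split cleanly and $\ddbar u$ must be interpreted as $\frac12 (dJdu)^{(1,1)}$; but this is precisely the setting in which Theorem \ref{main} is already proved, so the comparison argument goes through verbatim once phrased in terms of the eigenvalues $\lambda(\omega_u)$. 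I expect the least routine point to be the precise matching of the min-max constant $\log\sigma$ in the statement with the sub-slope of Definition \ref{subslope} and with $F_\infty$ for the quotient operator — everything else is either a citation or a one-line maximum principle.
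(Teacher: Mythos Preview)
Your approach is exactly the paper's: verify that $f(\lambda)=\log\bigl(S_k(\lambda)/S_l(\lambda)\bigr)$ on $\Gamma_k$ satisfies structural hypotheses (i)--(iii) and then invoke Theorem~\ref{main}; the paper's own proof is the single sentence ``Since the complex quotient Hessian equation satisfies condition (i)--(iii), using Theorem~\ref{main}, we can solve the complex quotient Hessian equation.'' Your verifications of (i)--(iii) are correct, and the uniqueness argument by the maximum principle is standard and fine.

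One small slip to clean up: the identification of the constant in the statement with the sub-slope of Definition~\ref{subslope} is \emph{direct} --- the formula $\min_{\omega_{u'}\in\Gamma_k}\max_M\{\log(\omega_{u'}^k\wedge\chi^{n-k}/\omega_{u'}^l\wedge\chi^{n-l})-h\}$ is literally $\inf_{u'\in\mathcal{E}}\max_M(F(u')-h)$ for this $F$, so no appeal to $F_\infty$ or item~(3) of Theorem~\ref{main} is needed there. (Indeed, for the quotient operator $f_{\infty,i}(\lambda)=\log\bigl(S_{k-1}(\lambda^{(i)})/S_{l-1}(\lambda^{(i)})\bigr)$ is finite, so $F_\infty\neq F$ and your parenthetical remark that the min--max ``matches $\min_{u'}\max_M(F_\infty(\omega_{u'})-h)$'' is not what you want.) This is cosmetic and does not affect the argument.
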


For the deformed Hermitian-Yang-Mills (dHYM) equation
\begin{equation}\label{DHYM}
\phi(\mu)=\sum_{i=1}^{n}\mathrm{arctan}\,\lambda_{i}=h,\qquad h\in C^{\infty}(M).
\end{equation}
The equation \eqref{DHYM} is hypercritical (resp. supercritical) if $h\in (\frac{(n-1)\pi}{2}, \frac{n\pi}{2})$ (resp. $h\in (\frac{(n-2)\pi}{2}, \frac{n\pi}{2})$).  Jacob-Yau \cite{JY} showed  the existence of solution for dimension 2,  and for general dimensions they gave a sufficient condition for the existence for solutions. In general dimensions, the equation \eqref{DHYM}
was solved by Collins-Jacob-Yau \cite{CJY}  under the existence of $\mathcal{C}$-subsolutions. We refer the reader to interesting papers such
as \cite{HZ,HZZ,Lin20,PV,PV1}. However, the existence of solutions to dHYM equations has not yet been proven on almost Hermitian manifolds. Now we provide a necessary and sufficient condition for the existence of solutions for dHYM equations on almost Hermitian manifolds.

\begin{theorem}\label{dHYM}
Let $(M,\chi,J)$ be a compact almost Hermitian manifold of real dimension $2n$ and $\omega$ be a smooth positive real $(1,1)$-form.  Assume h is a smooth function with $h\in (\frac{(n-2)\pi}{2}, \frac{n\pi}{2})$. Suppose there exists a $\mathcal{C}$-subsolution for equations
\begin{equation}\label{dhym 1}
\begin{cases}
\sum_{i=1}^{n}\mathrm{arctan}\,\lambda_{i}=h+\sigma, \\[1mm]
\ \sup_{M}u = 0.
\end{cases}
\end{equation}
 Then there exists a unique solution $u\in C^{\infty}(M)$ for equation \eqref{dhym 1}. Here $\sigma$ are defined by
 \begin{equation}
     \tan\sigma=\min_{\sum_{i=1}^{n}\mathrm{arctan}\,\lambda_{i}(u)\in(\frac{(n-2)\pi}{2}, \frac{n\pi}{2})}\max_{M} \tan (\sum_{i=1}^{n}\mathrm{arctan}\,\lambda_{i}(u)-h).
 \end{equation}
\end{theorem}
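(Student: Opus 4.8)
The plan is to deduce Theorem \ref{dHYM} from Theorem \ref{main}, the one genuinely new point being to check that the supercritical dHYM operator fits the framework \eqref{nonlinear equation}. There is no gradient term here, so $Z(\partial u)\equiv 0$, $\omega_u=\omega+\ddbar u$, and the eigenvalue function is $\phi(\lambda)=\sum_{i=1}^n\arctan\lambda_i$. Since $M$ is compact and $h$ is continuous with $h\in(\tfrac{(n-2)\pi}{2},\tfrac{n\pi}{2})$, we have $\tfrac{(n-2)\pi}{2}<\inf_M h\le\sup_M h<\tfrac{n\pi}{2}$; after estimating $\sigma$ one should verify that $h+\sigma$ again stays in a fixed compact subinterval of the supercritical range, which keeps all the constants below uniform.

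The main step is the structural verification. One has $\phi_i=(1+\lambda_i^2)^{-1}>0$, so ellipticity always holds, but $\phi$ is concave only on the positive orthant, and taking $\Gamma=\Gamma_n$ would — through condition (ii), $\sup_{\partial\Gamma}f<\inf_M h$ — force the hypercritical regime. Instead one takes $\Gamma$ to be a supercritical region $\{\phi>\hat\theta_0\}$ with $\hat\theta_0\in(\tfrac{(n-2)\pi}{2},\inf_M(h+\sigma))$: by the observation of Collins--Jacob--Yau \cite{CJY} this region is convex and its boundary lies on the level set $\{\phi=\hat\theta_0\}$, which gives condition (ii); composing $\phi$ with a suitable increasing concave reparametrisation depending only on $\inf_M h,\sup_M h$ makes the operator genuinely concave there, while leaving the equation unchanged up to relabelling the right-hand side; and condition (iii) holds because along a ray $\phi(t\lambda)$ increases to $\tfrac{\pi}{2}(\#\{\lambda_i>0\}-\#\{\lambda_i<0\})$, which on the relevant region exceeds every value strictly below $\sup\phi$. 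With the hypotheses of Theorem \ref{main} in place, the a priori estimates it rests on apply: one invokes the $C^{2,\alpha}$ estimates of Chu--Huang--Zhang \cite{CHZ} and Huang--Zhang \cite{HZ} in the almost Hermitian setting, and, for the dHYM structure specifically, J. Zhang \cite{ZJ21}.

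Identifying the sub-slope then yields the stated formula. Definition \ref{subslope} characterises $\sigma$ as the least constant for which some admissible $\underline u$ has phase defect $\phi(\omega_{\underline u})-h\le\sigma$ on $M$; since the reparametrisation above amounts, up to affine changes, to replacing each angle-defect $\theta\in(-\pi,\pi)$ by $\tan\theta$, and $\tan$ is increasing on the relevant sub-interval, this becomes $\tan\sigma=\min_{u'}\max_M\tan(\sum_i\arctan\lambda_i(u')-h)$. The $\mathcal C$-subsolution hypothesis of Theorem \ref{dHYM} is precisely hypothesis (2) of Theorem \ref{main} for equation \eqref{dhym 1}, so the implication $(2)\Rightarrow(1)$ produces a smooth $u\in\mathcal E$ with $\sum_i\arctan\lambda_i(u)=h+\sigma$, and subtracting a constant normalises $\sup_M u=0$.

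Finally, uniqueness is a maximum-principle argument: if $u_1,u_2$ both solve \eqref{dhym 1}, then at an interior maximum $p$ of $u_1-u_2$ one has $d(u_1-u_2)(p)=0$ and $\omega_{u_1}(p)\le\omega_{u_2}(p)$, so monotonicity of $\phi$ forces the eigenvalues to agree at $p$; subtracting the two equations and using concavity of the reparametrised operator, $w=u_1-u_2$ satisfies a homogeneous linear elliptic equation, whence the strong maximum principle gives $w\equiv\mathrm{const}$ and then $w\equiv 0$. The real obstacle is the structural step: unlike the complex Hessian quotient equation — where $f=\log(\sigma_k/\sigma_l)$ is already concave on the genuine cone $\Gamma_k$ and (i)--(iii) are immediate — the dHYM operator becomes admissible only after restricting to the supercritical region and reparametrising, so one must make sure that the hypotheses of Theorem \ref{main}, and the constants in the a priori estimates behind it, survive this reduction and stay uniform as the phase approaches the critical value $\tfrac{(n-2)\pi}{2}$.
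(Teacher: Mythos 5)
There is a genuine gap in the structural verification, and it is exactly the point the paper flags. You claim that condition (iii) holds for the supercritical dHYM operator "because along a ray $\phi(t\lambda)$ increases to $\frac{\pi}{2}(\#\{\lambda_i>0\}-\#\{\lambda_i<0\})$, which on the relevant region exceeds every value strictly below $\sup\phi$." This is false in the supercritical (but not hypercritical) range. If $\lambda\in\Gamma$ has a single negative entry — which is allowed when $\Gamma=\{\phi>\hat\theta_0\}$ for $\hat\theta_0\in(\tfrac{(n-2)\pi}{2},\tfrac{(n-1)\pi}{2})$ — then $\lim_{t\to\infty}\phi(t\lambda)=\tfrac{(n-2)\pi}{2}$, whereas $\sup_\Gamma\phi=\tfrac{n\pi}{2}$. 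Taking $\sigma$ between these two values gives a counterexample to (iii). Consequently Theorem \ref{main} cannot be invoked directly, and your plan of deducing Theorem \ref{dHYM} from Theorem \ref{main} after a reparametrisation breaks down at this step. (The reparametrisation also does not rescue concavity into the form (i) demands in a way compatible with preserving (iii); the two requirements pull in opposite directions here.)

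The paper explicitly says, just before Theorem \ref{dHYM}, that "the deformed Hermitian-Yang-Mills equations do not satisfy the condition (iii), the proof is similarly to Theorem \ref{main}." What it actually does is not a reduction to Theorem \ref{main}, but a rerun of the proof of Theorem \ref{existence} in parallel for \eqref{DHYM}: the continuity path \eqref{continuity equation}, the bound on $c_t$ (Lemma \ref{ct}), the persistence of the $\mathcal{C}$-subsolution (Proposition \ref{subslution t}), and the closeness/openness arguments are all stated and proved to hold for both \eqref{nonlinear equation} and \eqref{DHYM}, with the dHYM-specific inputs being Proposition \ref{dhym subsolution} (the Collins--Jacob--Yau characterisation of $\mathcal{C}$-subsolutions), the Remark asserting Proposition \ref{equivalent} for dHYM, and Theorem \ref{main estimate} (Huang--Zhang) which already covers \eqref{DHYM}. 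If you want to salvage your write-up, you should drop the claim that (iii) holds, abandon the reduction-to-Theorem-\ref{main} framing, and instead run the Section 3 continuity argument directly for $f(\lambda)=\sum_i\arctan\lambda_i$, citing Proposition \ref{dhym subsolution}, the Remark after Proposition \ref{equivalent}, and Theorem \ref{main estimate} at the points where the general proof uses (iii) or the general $\mathcal{C}$-subsolution characterisation.
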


Although the deformed Hermitian-Yang-Mills equations do not satisfy the condition (iii), the proof is similarly to Theorem \ref{main}. In the following we use F(u) or $f(\lambda)$ to denote both the equation \eqref{nonlinear equation} satisfying 
(i),(ii) and (iii) and $F(u)=f(\lambda)=\sum_{i}\arctan\lambda_i$ when no confusion will arise.

The paper is organized as follows. In Section \ref{preliminaries}, we will introduce some notations, recall definition of $\mathcal{C}$-subsolution and sub-slope.  In Section 3, we will prove Theorem \ref{main} and Theorem \ref{complex Hessian equation}.

\section{Preliminaries}\label{preliminaries}

\subsection{Notations}
We first recall the definition of   $(p,q)$-form and operators $\de$, $\dbar$ on almost Hermitian manifolds $(M,\chi,J)$ (see e.g. \cite[p. 1954]{CTW19}). We denote
\[
A^{1,1}(M) = \big\{ \alpha: \text{$\alpha$ is a smooth real (1,1)-forms on $(M,J)$} \big\}
\]
and
\[
\ddbar u = \frac{1}{2}(dJdu)^{(1,1)}
\]
for any $u\in C^{\infty}(M)$.

For any point $x_{0}\in M$, suppose $\{e_{i}\}_{i=1}^{n}$ is a local unitary $(1,0)$-frame with respect to $\chi$ near $x_{0}$ and $\{\theta^{i}\}_{i=1}^{n}$ are its dual coframe. It follows
\[
\chi=\sqrt{-1}\delta_{ij}\theta^{i}\wedge\ov{\theta}^{j}.
\]
Using $\{\theta^{i}\}_{i=1}^{n}$, assume
\[
\omega = \sqrt{-1}g_{i\ov{j}}\theta^{i}\wedge\ov{\theta}^{j}, \quad
\omega_{u} = \sqrt{-1}\ti{g}_{i\ov{j}}\theta^{i}\wedge\ov{\theta}^{j},
\]
where (see e.g. \cite[(2.5)]{HL15})
\[
\ti{g}_{i\ov{j}} = g_{i\ov{j}}+(\de\dbar u)(e_{i},\ov{e}_{j})+Z_{i\bar j}
= g_{i\ov{j}}+e_{i}\overline{e}_{j}(u)-[e_{i},\overline{e}_{j}]^{(0,1)}(u)+Z_{i\bar j}.
\]
Here $[e_{i}, \bar{e}_{j}]^{(0,1)}$ is the $(0,1)$-part of the Lie bracket $[e_{i}, \bar{e}_{j}]$. Define
\begin{equation*}
F^{i\overline{j}}=\frac{\partial F}{\de\ti{g}_{i\ov{j}}}, \quad
F^{i\overline{j},k\overline{l}}=\frac{\partial^{2}F}{\de\ti{g}_{i\ov{j}}\de\ti{g}_{k\ov{l}}}.
\end{equation*}
In addition, there exists $\{e_{i}\}_{i=1}^{n}$ such that $\tilde{g}_{i\overline{j}}(x_{0})=\delta_{ij}\tilde{g}_{i\overline{i}}(x_{0})$.
We denote $\tilde{g}_{i\overline{i}}(x_{0})$ by $\lambda_{i}$ and assume
\begin{equation}\label{mu order}
\lambda_{1}\geq\lambda_{2}\geq\cdots\geq\lambda_{n}.
\end{equation}
At $x_{0}$, we have (see e.g. \cite{Spruck05})
\begin{equation}\label{second derive of F}
F^{i\ov{j}} = \delta_{ij}f_{i}.
\end{equation}
 Using \eqref{mu order}, we obtain (see e.g. \cite{Spruck05})
\begin{equation}\label{F ii 1}
F^{1\ov{1}} \leq F^{2\ov{2}} \leq \cdots \leq F^{n\ov{n}}.
\end{equation}

\subsection{$\mathcal{C}$-subsolution}
First we recall the definition of $\mathcal{C}$-subsolution introduced by Sz\'ekelyhidi \cite{Szekelyhidi18}, based on Guan \cite{Guan14}. 
\begin{defn}[Definition 1 of \cite{Szekelyhidi18}]\label{sub}
We say that a smooth function $\underline{u}:M\rightarrow\mathbb{R}$ is a $\mathcal{C}$-subsolution of \eqref{nonlinear equation} or \eqref{DHYM} if at each point $x\in M$, the set
\begin{equation*}
\left\{\lambda\in \Gamma: f(\lambda)=h(x) \text{\ and\ }  \lambda-\lambda(\underline{u})\in \Gamma_{n}\right\}
\end{equation*}
is bounded, where $\lambda(\underline{u})=(\lambda_{1}(\underline{u}),\ldots,\lambda_{n}(\underline{u}))$ denote the eigenvalues of $\omega+\ddbar\underline u$ with respect to $\chi$.
\end{defn}

Set
\begin{equation*}
    \mathcal{E}=\{u\in C^{\infty}(M): \lambda(\omega_{u})(p)\in \Gamma, \text{for any p}\in M\}.
\end{equation*}
Consider the function
\begin{equation*}
    f_{\infty,i}:\Gamma\rightarrow \mathbb{R},
\end{equation*}
where
\begin{equation*}
    f_{\infty, i}(\lambda_1,\cdots, \lambda_n)=\lim_{R\rightarrow \infty}f(\lambda_1,\cdots, \lambda_{i-1}, R, \lambda_{i+1}, \cdots, \lambda_n),
\end{equation*}
for $i=1,2,\cdots, n$ and
\begin{equation*}
    f_{\infty}(\lambda)=\min_{i=1,\cdots,n}f_{\infty,i}(\lambda).
\end{equation*}
We also denote
\begin{equation}\label{f infty}
    F_{\infty}(u)=f_{\infty}(\lambda(\omega_u)).
\end{equation}

For deformed Hermitian-Yang-Mills equation, Collins-Jacob-Yau proved the following results.
\begin{proposition}\label{dhym subsolution}
    The function $\underline{u}$ is a $\mathcal{C}$-subsolution of the equation\eqref{DHYM}
    if and only if $\sum_{i\neq k}\arctan\lambda_i(x)>h-\frac{\pi}{2}$ for any $x\in M$, $k=1,\cdots,n$.
\end{proposition}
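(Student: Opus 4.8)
The plan is to characterize the boundedness of the set $S_x(\underline u) = \{\lambda \in \Gamma : \phi(\lambda) = h(x),\ \lambda - \lambda(\underline u) \in \Gamma_n\}$ appearing in Definition \ref{sub} directly in terms of the quantities $\phi_{\infty,k}(\lambda(\underline u)) = \lim_{R\to\infty}\phi(\lambda_1(\underline u),\ldots,R,\ldots,\lambda_n(\underline u))$, which for the dHYM operator $\phi(\mu) = \sum_i \arctan\lambda_i$ are computed explicitly. The key observation is that since $\arctan$ is increasing and bounded, sending the $k$-th slot to $+\infty$ gives $\phi_{\infty,k}(\lambda(\underline u)) = \frac{\pi}{2} + \sum_{i\neq k}\arctan\lambda_i(\underline u)$. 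I would then show that $S_x(\underline u)$ is bounded if and only if $\phi_{\infty,k}(\lambda(\underline u)) > h(x)$ for every $k = 1,\ldots,n$, which unwinds to exactly the stated inequality $\sum_{i\neq k}\arctan\lambda_i(x) > h(x) - \frac{\pi}{2}$.

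First I would establish the ``only if'' direction by contraposition: suppose $\sum_{i\neq k}\arctan\lambda_i(\underline u) \le h(x) - \frac{\pi}{2}$ for some index $k$. Fixing all coordinates except the $k$-th equal to $\lambda_i(\underline u) + \epsilon$ for small $\epsilon > 0$ (so that the strict inequality $\lambda - \lambda(\underline u) \in \Gamma_n$ can be arranged) and letting the $k$-th coordinate $t \to \infty$, the value $\phi$ of this ray tends to $\frac{\pi}{2} + \sum_{i\neq k}(\arctan\lambda_i(\underline u) + \text{small}) $, which can be made to exceed $h(x)$ only slightly or not at all; more carefully, by the intermediate value theorem applied along such rays one produces points of $S_x(\underline u)$ with arbitrarily large $k$-th coordinate, contradicting boundedness. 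For the ``if'' direction, assume $\sum_{i\neq k}\arctan\lambda_i(\underline u) > h(x) - \frac{\pi}{2}$ for all $k$. I would argue that on the level set $\{\phi = h(x)\}$ intersected with $\lambda - \lambda(\underline u) \in \Gamma_n$, each coordinate $\lambda_k$ is bounded above: if $\lambda_k$ were large while $\lambda - \lambda(\underline u) \in \Gamma_n$, then $\phi(\lambda) \ge \sum_{i\neq k}\arctan\lambda_i(\underline u) + \arctan\lambda_k$, and letting $\lambda_k \to \infty$ this forces $\phi(\lambda) \to \text{something} > h(x)$ eventually, so $\lambda_k$ must stay bounded. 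Combined with the lower bound $\lambda_i > \lambda_i(\underline u)$ from the cone condition, this gives boundedness of $S_x(\underline u)$.

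The main subtlety — and the step I expect to require the most care — is handling the interface between the strict cone condition $\lambda - \lambda(\underline u) \in \Gamma_n$ and the limiting/supremum definition of $\phi_{\infty,k}$, together with the fact that the relevant cone $\Gamma$ for dHYM is $\{\mu : \sum_i \arctan\lambda_i > (n-1)\frac{\pi}{2} - \phi\}$-type and need not be all of $\mathbb R^n$; one must check that the rays used in the argument genuinely stay inside $\Gamma$ and that $S_x(\underline u)$ being unbounded really does produce a divergent sequence along which some single coordinate blows up (this uses that if all coordinates stay bounded the set is automatically bounded, while if some coordinate diverges one can pass to a subsequence and use monotonicity of $\arctan$ coordinatewise). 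Since $\phi$ is smooth, concave on its cone, and each $\phi_i > 0$, these manipulations are routine once the correct rays are chosen, and the explicit formula for $\phi_{\infty,k}$ makes the final inequality transparent. I would also remark that this proposition is essentially Lemma/Proposition from Collins-Jacob-Yau \cite{CJY} transcribed to the almost Hermitian setting, where the proof is purely pointwise-linear-algebraic in $\lambda$ and hence insensitive to the almost complex structure.
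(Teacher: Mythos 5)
The paper does not actually prove this statement; it records it as a result of Collins--Jacob--Yau \cite{CJY}, so the comparison is against the standard argument, which is indeed the pointwise linear-algebra computation you outline. Your ``if'' direction and the identification $\phi_{\infty,k}(\lambda(\underline u))=\tfrac{\pi}{2}+\sum_{i\neq k}\arctan\lambda_i(\underline u)$ are correct. The gap is in the ``only if'' direction. You propose the ray in which all coordinates $i\neq k$ are frozen at $\lambda_i(\underline u)+\epsilon$ for a \emph{small fixed} $\epsilon>0$ while $\lambda_k=t\to\infty$, and then invoke the intermediate value theorem. But if the hypothesis $\sum_{i\neq k}\arctan\lambda_i(\underline u)\le h(x)-\tfrac{\pi}{2}$ holds with \emph{strict} inequality, then for $\epsilon$ small enough the limiting value of $\phi$ along your ray is
\[
\tfrac{\pi}{2}+\sum_{i\neq k}\arctan\bigl(\lambda_i(\underline u)+\epsilon\bigr)<h(x),
\]
so the ray never meets the level set $\{\phi=h(x)\}$ at all, and the IVT produces nothing. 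Taking $\epsilon$ larger does make the ray cross the level set, but then the intersection occurs at bounded $t$, so you do not directly get points with arbitrarily large $k$-th coordinate; the ``more carefully, by the intermediate value theorem\ldots'' sentence is where the argument breaks.

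The fix is to swap the roles of the two parameters. Fix a large $R>\lambda_k(\underline u)$, set $\lambda_k=R$, and let the translation parameter $\epsilon\ge 0$ of the remaining coordinates vary. At $\epsilon=0$ one has
\[
\phi=\arctan R+\sum_{i\neq k}\arctan\lambda_i(\underline u)<\tfrac{\pi}{2}+\Bigl(h(x)-\tfrac{\pi}{2}\Bigr)=h(x),
\]
while as $\epsilon\to\infty$, $\phi\to\arctan R+(n-1)\tfrac{\pi}{2}>h(x)$ once $R$ is large enough (using $h(x)<n\tfrac{\pi}{2}$). The intermediate value theorem in $\epsilon$ then yields a point of the level set with $k$-th coordinate exactly $R$; it lies in $\lambda(\underline u)+\Gamma_n$ by construction and in $\Gamma$ automatically because $\phi=h(x)>(n-2)\tfrac{\pi}{2}$. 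Letting $R\to\infty$ shows the set is unbounded. (Note also that one does not need the whole ray to stay in $\Gamma$, only the intersection point, which resolves the subtlety you flagged.) With this correction your outline matches the standard proof.
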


For equation \eqref{nonlinear equation} satisfying (i)-(iii), a equivalent characterization of $\mathcal{C}$-subsolution also proved by Sz\'ekelyhidi \cite{Szekelyhidi18}. 
\begin{proposition}\label{equivalent}
   The function $\underline{u}$ is a $\mathcal{C}$-subsolution of the equation $f(\lambda(\omega_u))=h(x)$
    if and only if $f_{\infty}(\lambda(\omu))(x)>h(x)$ for any $x\in M$.
\end{proposition}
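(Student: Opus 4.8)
The plan is to verify the equivalence at each point $x\in M$ separately. Fix $x$, write $\mu=\lambda(\underline{u})(x)$ (which lies in $\Gamma$, as is tacit in writing $f_{\infty}(\lambda(\omu))$) and $h=h(x)$, let $\mathbf{1}=(1,\dots,1)$, let $\mathbf{e}_{i}\in\mathbb{R}^{n}$ denote the $i$-th coordinate vector, and set
\[
S=\{\lambda\in\Gamma:\ f(\lambda)=h,\ \lambda-\mu\in\Gamma_{n}\},
\]
so that $\underline{u}$ is a $\mathcal{C}$-subsolution at $x$ precisely when $S$ is bounded. The argument rests on two elementary facts: (a) since $\Gamma$ is an open convex cone containing $\Gamma_{n}$, one has $\Gamma+\overline{\Gamma_{n}}\subseteq\Gamma$; that is, adding a vector with non-negative entries to a point of $\Gamma$ keeps it in $\Gamma$, so all the auxiliary points below are legitimate; (b) since $f_{i}>0$, for each $i$ the map $R\mapsto f(\lambda_{1},\dots,\lambda_{i-1},R,\lambda_{i+1},\dots,\lambda_{n})$ is strictly increasing, so its limit $f_{\infty,i}(\lambda)$ as $R\to\infty$ is strictly larger than every one of its values, and $f_{\infty,i}$ is non-decreasing in the variables $\lambda_{j}$, $j\neq i$.

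To see that $f_{\infty}(\mu)>h$ forces $S$ to be bounded, suppose not and choose $\lambda^{(k)}\in S$ with $|\lambda^{(k)}|\to\infty$; after passing to a subsequence there is a fixed index $i_{0}$ with $\lambda^{(k)}_{i_{0}}=\max_{j}\lambda^{(k)}_{j}\to\infty$. Since $\lambda^{(k)}_{j}\ge\mu_{j}$ for every $j$, lowering the coordinates $j\neq i_{0}$ of $\lambda^{(k)}$ down to $\mu_{j}$, one at a time, passes only through points of $\Gamma$ (each is $\mu$ plus a vector with non-negative entries, by (a)) and cannot increase $f$; hence
\[
h=f(\lambda^{(k)})\ \ge\ f(\mu_{1},\dots,\mu_{i_{0}-1},\lambda^{(k)}_{i_{0}},\mu_{i_{0}+1},\dots,\mu_{n}).
\]
Letting $k\to\infty$, the right-hand side converges to $f_{\infty,i_{0}}(\mu)\ge f_{\infty}(\mu)>h$, a contradiction.

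For the converse I would argue by contraposition: assume $f_{\infty,i_{0}}(\mu)\le h$ for some $i_{0}$, and produce an unbounded subset of $S$. By (b), $f(\mu+t\mathbf{e}_{i_{0}})<f_{\infty,i_{0}}(\mu)\le h$ for all $t>0$. On the other hand, monotonicity together with assumption (iii) applied to $\mathbf{1}\in\Gamma$ gives $\lim_{s\to\infty}f(\mu+s\mathbf{1})=\sup_{\Gamma}f$, which is $>h$ (here one uses $h<\sup_{\Gamma}f$, which holds in the cases of interest, where $\sup_{\Gamma}f=+\infty$). Consequently, for each fixed $t>0$ the continuous function $s\mapsto f(\mu+s\mathbf{1}+t\mathbf{e}_{i_{0}})$ on $(0,\infty)$ is $<h$ near $s=0$ and $>h$ for $s$ large, so by the intermediate value theorem there is $s_{t}>0$ with $f(\mu+s_{t}\mathbf{1}+t\mathbf{e}_{i_{0}})=h$. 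The point $\lambda^{(t)}:=\mu+s_{t}\mathbf{1}+t\mathbf{e}_{i_{0}}$ then satisfies $\lambda^{(t)}-\mu\in\Gamma_{n}$ and $\lambda^{(t)}_{i_{0}}\ge\mu_{i_{0}}+t$, so $\{\lambda^{(t)}:t>0\}\subseteq S$ is unbounded; thus $\underline{u}$ is not a $\mathcal{C}$-subsolution at $x$.

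The delicate points are entirely about remaining inside the cone: each auxiliary point in both constructions has to be checked to lie in $\Gamma$, which is exactly what (a) delivers. One also uses that along a coordinate ray $f$ only approaches but never attains $f_{\infty,i}$ — this genuinely needs $f_{i}>0$, not merely $f_{i}\ge0$ — and that the supremum of $f$ along the diagonal is the full $\sup_{\Gamma}f>h$, so that the intermediate-value step in the converse has a real sign change; this last point is precisely where assumption (iii) is used.
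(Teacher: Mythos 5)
The paper does not give a proof of this proposition; it simply cites Sz\'ekelyhidi. Your self-contained argument is correct and is essentially the standard one: in one direction you push a single eigenvalue $\lambda^{(k)}_{i_0}\to\infty$ while dropping the remaining coordinates down to the reference eigenvalues $\mu_j$ and pass to the limit to contradict $f_\infty(\mu)>h$; in the converse you build an unbounded family in the level set $S$ by the intermediate value theorem along the diagonal. The cone-membership verifications hinge on your fact~(a), $\Gamma+\overline{\Gamma_n}\subseteq\Gamma$, which you prove correctly from openness and convexity, and all the auxiliary points are legitimately in $\Gamma$. The step $\lim_{s\to\infty}f(\mu+s\mathbf{1})=\sup_\Gamma f$ should be spelled out slightly (write $\mu+s\mathbf{1}\ge(s+\min_i\mu_i)\mathbf{1}$ componentwise and apply~(iii) to $\mathbf{1}$), since the ray $t\mapsto t\mathbf{1}$ is not the ray $s\mapsto\mu+s\mathbf{1}$; but the content is there.

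The one substantive issue is the parenthetical remark that the converse needs $h(x)<\sup_\Gamma f$, which you should promote to an explicit hypothesis rather than wave at. Under Sz\'ekelyhidi's stronger assumption $\lim_{t\to\infty}f(t\lambda)=\infty$, one automatically has $\sup_\Gamma f=\infty$ and the issue evaporates, and the same holds for the Hessian quotient examples the paper treats. But the paper's weakened condition~(iii) only forces $\lim_{t\to\infty}f(t\lambda)=\sup_\Gamma f$, which can be finite. If $\sup_\Gamma f<\infty$ and $h(x_0)\ge\sup_\Gamma f$ at some $x_0$, then $f(\lambda)<\sup_\Gamma f\le h(x_0)$ for every $\lambda\in\Gamma$ (strict since $f_i>0$), so $S=\emptyset$ is trivially bounded and $\underline{u}$ is a $\mathcal{C}$-subsolution there, yet $f_\infty(\mu)\le\sup_\Gamma f\le h(x_0)$, so the stated equivalence fails. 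This is a gap in the paper's statement of the proposition under hypotheses (i)--(iii), not in your proof: either $\sup_M h<\sup_\Gamma f$ should be added as a standing assumption, or one should note that it is automatically implied in the direction the paper actually uses (when a subsolution is assumed to exist). You correctly identified where the hypothesis enters; just state it as a hypothesis rather than as an aside.
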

\begin{remark}
    Using Proposition \ref{dhym subsolution}, it is not difficult to see that Proposition \ref{equivalent} is also hold for the deformed Hermitian-Yang-Mills equations.
\end{remark}

In addition, $f_{\infty}$ have the following properties (see \cite{NT,GS}):
\begin{proposition}
The function $f_{\infty}$ satisfies
\begin{enumerate}
    \item  either $f_{\infty}\equiv\infty$ in $\Gamma$; \\
     \item or $f_{\infty}(\lambda)$ is bounded for each $\lambda\in \Gamma$. 
     \end{enumerate}
\end{proposition}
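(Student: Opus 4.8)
The plan is to establish the dichotomy by analyzing the homogeneity structure of $f_\infty$ along rays in $\Gamma$. First I would recall the defining relations: $f_{\infty,i}(\lambda)$ is obtained by sending the $i$-th slot to infinity, and $f_\infty=\min_i f_{\infty,i}$. The key structural fact, which follows from concavity of $f$ together with $f_i>0$, is that for each fixed $\lambda$ the function $t\mapsto f(\lambda_1,\dots,R,\dots,\lambda_n)$ is monotone increasing in $R$, so the limit $f_{\infty,i}(\lambda)$ exists in $(-\infty,+\infty]$. Moreover, because $f$ is concave and increasing in each variable, $f_{\infty,i}$ is itself concave and non-decreasing in the remaining variables, and one checks that $f_{\infty,i}$ is independent of $\lambda_i$ (sending $\lambda_i\to\infty$ washes out any dependence on it, using monotonicity). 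These observations reduce the problem to understanding when $f_{\infty,i}$ can be finite at one point of $\Gamma$ yet infinite at another.

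The heart of the argument is a connectedness/openness dichotomy. I would show that the set $A=\{\lambda\in\Gamma: f_\infty(\lambda)=+\infty\}$ and its complement $B=\{\lambda\in\Gamma: f_\infty(\lambda)<+\infty\}$ are each open in $\Gamma$; since $\Gamma$ is connected, one of them is empty, which is exactly the claimed dichotomy. That $A$ is open should follow because if $f_{\infty,i}(\lambda)=\infty$ for all $i$ at a point, then for $\mu$ near $\lambda$ one can dominate $f(\mu_1,\dots,R,\dots,\mu_n)$ from below by $f(\lambda_1-\delta,\dots,R,\dots,\lambda_n-\delta)$ using monotonicity (shrinking all coordinates slightly), and this already tends to infinity. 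For openness of $B$: if $f_{\infty,k}(\lambda)<\infty$ for some $k$, then concavity gives an affine upper bound $f(\mu)\le \ell(\mu)$ valid on a ray (the supporting hyperplane at a point with large $k$-th coordinate), and pushing $\mu_k\to\infty$ keeps $f_{\infty,k}$ bounded on a neighborhood; more carefully, the gradient components $f_i(\mu)$ are monotone in a controlled way along such rays, and the finiteness of $f_{\infty,k}(\lambda)$ forces $f_k(\mu)\to 0$ uniformly nearby, giving a locally uniform finite bound on $f_{\infty,k}$, hence on $f_\infty$.

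I expect the main obstacle to be the openness of $B$ — i.e., propagating the finiteness of $f_{\infty}$ from a single point to a neighborhood. The subtlety is that $f_\infty=\min_i f_{\infty,i}$, so finiteness at $\lambda$ only guarantees \emph{some} index $k$ with $f_{\infty,k}(\lambda)<\infty$, and one must ensure the \emph{same} index continues to give a finite (and locally bounded) value nearby; this is where concavity must be used quantitatively, via the monotone convergence of the difference quotients $\frac{f(\lambda+Re_k)-f(\lambda)}{R}\downarrow$ some limit $\ge 0$, with the limit being $0$ precisely when $f_{\infty,k}(\lambda)<\infty$. Once that monotone-limit characterization is in hand, the neighborhood bound follows from the fact that $f_k$ is continuous and bounded above by any of these difference quotients. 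A cleaner alternative, which I would pursue in parallel, is simply to cite the proof in \cite{NT} or \cite{GS} and remark that the argument there is purely about the symmetric function $f$ on the cone $\Gamma$ and does not use any feature of the Hermitian (as opposed to almost Hermitian) setting, so it transfers verbatim; the almost-complex structure enters only through how $\lambda(\omega_u)$ depends on $u$, not through $f_\infty$ itself.
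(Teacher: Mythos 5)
The paper does not give its own argument for this proposition — it only cites \cite{NT,GS} — so your fallback of transferring that proof verbatim, observing that it concerns only the pair $(f,\Gamma)$ and not the almost-complex structure, is exactly what the paper does.

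If you wish to carry out the direct argument you sketch, two steps need repair. The openness-of-$A$ step is circular: you assert that $f(\lambda_1-\delta,\dots,R,\dots,\lambda_n-\delta)\to\infty$, but this is precisely the statement that $f_{\infty,i}=\infty$ persists at the shrunken point, which is what you are trying to establish. In the openness-of-$B$ step you concentrate on showing $f_k\to 0$ along the ray, but that is not the quantity that propagates finiteness to another base point $\mu$. Writing $\lambda[R]=(\lambda_1,\dots,\lambda_{k-1},R,\lambda_{k+1},\dots,\lambda_n)$, concavity gives
\[
f(\mu[R])\le f(\lambda[R])+\sum_{j\ne k}f_j(\lambda[R])\,(\mu_j-\lambda_j),
\]
so what is actually needed is a bound, uniform in $R$, on the \emph{transverse} derivatives $f_j(\lambda[R])$ for $j\ne k$, and this you do not supply. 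The bound is available: by concavity in the $e_j$-direction and monotonicity in the $k$-th slot,
\[
0<f_j(\lambda[R])\le \frac{f(\lambda[R])-f(\lambda[R]-s e_j)}{s}\le \frac{f_{\infty,k}(\lambda)-f(\lambda-s e_j)}{s}
\]
for any small $s>0$ with $\lambda-s e_j\in\Gamma$. Once this is in place, the displayed concavity inequality shows $f_{\infty,k}(\mu)<\infty$ for \emph{every} $\mu\in\Gamma$, not merely nearby ones, so the dichotomy follows directly and you do not need openness of $A$ or the connectedness of $\Gamma$ at all.
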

   \begin{remark}
   It is easy to see that for deformed Hermitian-Yang-Mills equation,  $f_{\infty}(\lambda)$ is bounded.
\end{remark}

Using the $\mathcal{C}$-subsolution, Huang-Zhang \cite[Corollary 1.4 or Proposition 3.11]{HZ} proved the a priori estimates on almost Hermitian manifolds: 
\begin{theorem}\label{main estimate}
Let $(M,\chi,J)$ be a compact almost Hermitian manifold of real dimension $2n$ and $\underline{u}$ is a $\mathcal{C}$-subsolution of \eqref{nonlinear equation} or \eqref{DHYM}. Suppose that $u$ is a smooth solution of (\ref{nonlinear equation}) or \eqref{DHYM}. Then for any $\alpha\in(0,1)$, we have the following estimate
\begin{equation*}
	\| u\|_{C^{2,\alpha}(M,\chi)}\leq C,
\end{equation*}
where $C$ is a constant depending only on  $\alpha$, $\underline{u}$, $h$, $\omega$, $f$, $\Gamma$ and $(M,\chi,J)$.
\end{theorem}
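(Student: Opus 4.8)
Theorem \ref{main estimate} is the a priori estimate established by Huang--Zhang \cite{HZ} (see also \cite{CHZ}); I indicate the shape of the argument one would give. The plan is to run the classical three-stage program --- zeroth-, first- and second-order estimates --- and then bootstrap to $C^{2,\alpha}$. For the last step, the key observation is that once a two-sided bound on $\partial\bar\partial u$ is known, the structure condition (ii) together with the $\mathcal C$-subsolution confine the eigenvalues $\lambda(u)$ to a fixed compact set $K\subset\Gamma$ on which $f_i>0$; hence $F^{i\bar j}$ is uniformly elliptic, and since $f$ (so $F$ as a function of the entries $\tilde g_{i\bar j}$) is concave, a complex Evans--Krylov argument in local coordinates, as in \cite{CHZ}, yields an interior $C^{2,\alpha}$ bound, the lower-order, torsion and gradient terms being handled by Schauder theory. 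So the substance lies in the $C^0$, $C^1$ and $C^2$ estimates.

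For the $C^0$ estimate I would follow Sz\'ekelyhidi's method \cite{Szekelyhidi18} adapted to the almost Hermitian setting: normalising $\sup_M u=0$, one bounds $-\inf_M u$ by the Alexandrov--Bakelman--Pucci maximum principle on a small coordinate ball about a minimum point of $u$, the $\mathcal C$-subsolution (via the characterisation in Proposition \ref{equivalent}) being used to confine the competing eigenvalue configuration; the term $Z(\partial u)$ enters only as a first-order term of the linearisation and is absorbed. For the gradient estimate I would use the blow-up/Liouville scheme of Sz\'ekelyhidi \cite{Szekelyhidi18} (resting on a Liouville-type theorem of Dinew--Kolodziej), transplanted to the almost Hermitian setting as in \cite{CHZ}: were $\sup_M|\partial u|_\chi$ not uniformly bounded, one rescales around points of maximal gradient, so that $J$ and $\chi$ flatten in the limit to the standard data on $\mathbb C^n$ and one obtains a nonconstant bounded entire solution of the constant-coefficient equation, contradicting the Liouville theorem; one may also run the gradient and second-order estimates together.

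The heart is the second-order estimate, i.e.\ bounding $\sup_M\lambda_1(u)$. I would apply the maximum principle to a test quantity
\[
Q=\log\lambda_1(\tilde g)+\varphi\big(|\partial u|_\chi^2\big)+\psi\big(u-\underline u\big),
\]
with monotone/convex $\varphi,\psi$ to be chosen. At a maximum point of $Q$, after the standard perturbation making $\lambda_1$ a smooth function of the entries $\tilde g_{i\bar j}$, one differentiates $Q$ once and twice and applies the linearised operator $L=\sum_{i,j}F^{i\bar j}(e_i\bar e_j-[e_i,\bar e_j]^{(0,1)})$ (plus the first-order contribution of $Z$). Concavity of $F$ controls the second-order term $F^{i\bar j,k\bar l}(\partial_1\tilde g_{i\bar j})(\overline{\partial_1\tilde g_{k\bar l}})$ through the standard comparison with $\sum_{i>1}\frac{F^{i\bar i}-F^{1\bar 1}}{\lambda_1-\lambda_i}|\partial_1\tilde g_{i\bar 1}|^2$; and the $\mathcal C$-subsolution is exploited through the dichotomy of \cite[Prop.~6]{Szekelyhidi18}/Proposition \ref{equivalent}: there are $\kappa,R>0$ so that at each point either $\sum_iF^{i\bar i}(\tilde g_{i\bar i}-\underline{\tilde g}_{i\bar i})\ge\kappa\sum_iF^{i\bar i}$, or $F^{1\bar 1}\ge\kappa\sum_iF^{i\bar i}$ --- in the first case the negative term produced by $\psi$ dominates the errors, in the second case the comparability of $F^{1\bar1}$ to the full trace lets the remaining terms be absorbed. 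Throughout one must carry the torsion error terms coming from $d\chi\neq0$ and the non-integrability of $J$ (the brackets $[e_i,\bar e_j]$, the non-symmetry of $\tilde g_{i\bar j}$) together with the first-order terms from $Z(\partial u)$, dominating them by Cauchy--Schwarz against the good term and by choosing the constants in $\varphi,\psi$ large enough, exactly as in \cite{CHZ,HZ}.

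The step I expect to be the main obstacle is precisely this second-order estimate in the genuinely almost Hermitian, gradient-term setting: exploiting the concavity inequality and the subsolution dichotomy while absorbing every torsion and commutator error (all of which vanish when $(M,\chi,J)$ is K\"ahler), and calibrating the auxiliary constants so that no error survives. That bookkeeping is the technical core carried out in \cite{CHZ,HZ}; granting those estimates, the $C^0$ and $C^1$ steps and the Evans--Krylov bootstrap are comparatively routine.
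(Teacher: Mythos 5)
The paper does not prove this theorem; it quotes it as an external input, attributing it to Huang--Zhang \cite[Corollary 1.4 or Proposition 3.11]{HZ}. You correctly flag this, and your sketch --- $C^0$ via ABP around a minimum point using the $\mathcal C$-subsolution, gradient estimate by a blow-up/Liouville argument (with the caveat that the $C^1$ and $C^2$ estimates may need to be coupled when the gradient term $Z(\partial u)$ is present), second-order estimate by applying the linearised operator to $Q=\log\lambda_1+\varphi(|\partial u|^2_\chi)+\psi(u-\underline u)$ together with the concavity inequality and the Sz\'ekelyhidi dichotomy from the subsolution, and finally Evans--Krylov plus Schauder --- faithfully reproduces the structure of the argument in \cite{CHZ,HZ}. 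This is the same approach the cited work uses; there is nothing in the present paper to compare it against beyond the citation itself.

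One small caution: you appeal to Proposition \ref{equivalent} to mediate the subsolution hypothesis; that proposition is an equivalent characterisation via $f_\infty$, whereas the pointwise dichotomy you actually invoke in the second-order estimate is Sz\'ekelyhidi's \cite[Prop.~6]{Szekelyhidi18} (quantitative form of the $\mathcal C$-subsolution condition), which is a distinct statement. The two are consistent, but conflating them blurs which property of $\underline u$ is doing the work at each stage. In \cite{HZ} the additional point, which you gesture at but is the genuine technical core, is that the term $Z(\partial u)$ contributes extra first-order errors both to the linearised operator and to the twice-differentiated $\tilde g_{i\bar j}$, and these must be absorbed by the good terms generated by $\varphi$ and $\psi$ together with Cauchy--Schwarz against $F^{i\bar i}$; that bookkeeping is exactly where \cite{HZ} departs from \cite{CHZ}. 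Your proposal is an accurate summary rather than a complete proof, but as a reconstruction of the cited result's method it is sound.
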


Recently, Guo-Song \cite{GS} introduced the sub-slope such  that the subsolutions can be preserved along continuity path in continuity method on Riemannian manifolds or Hermitian manifolds. Here we modify their definition such that we  do not need  that the right hand is positive.
\begin{defn}\label{subslope}
    The sub-slope $\sigma$ for equation  \eqref{nonlinear equation} or \eqref{DHYM} associated to the cone $\Gamma$ is defined to be 
    $$\sigma=\inf_{u\in \mathcal{E}}\max_{M}(F(u)-h).$$
\end{defn}
\begin{remark}
   The constant $\sigma$ has a lower bound: $\sigma\geq \inf_M F(\omega)-\sup_{M}h$. In fact for any function $u\in \mathcal{E}$, assume $u$ achieves a minimum at $x_0$.Then 
    \begin{equation*}
        F(\omega_u)(x_0)\geq F(\omega)(x_0).
    \end{equation*}
    It follows that
    \begin{equation*}
    \begin{split}
         \max_{M}(F(\omega_u)-h)\geq &F(\omega_u)(x_0)-h(x_0)\\
    \geq & F(\omega)(x_0)-\sup h\\
    \geq & \inf_M F(\omega)-\sup_{M}h.
    \end{split}
    \end{equation*}
\end{remark}

\subsection{G{\aa}rding cone}\label{cone}
 For $1\leq k\leq n$ and  any $\lambda=(\lambda_{1},\lambda_{2},\cdots,\lambda_{n})\in\mathbb{R}^{n}$, we denote
\begin{equation*}
\sigma_{k}(\lambda) =\sum_{1\leq i_{1}<\cdots<i_{k}\leq n}\lambda_{i_{1}}\lambda_{i_{2}}\cdots\lambda_{i_{k}}
\end{equation*}
and
\begin{equation*}
\Gamma_{k} = \{ \lambda\in\mathbb{R}^{n}: \text{$\sigma_{i}(\lambda)>0$ for $i=1,2,\cdots,k$} \}.
\end{equation*}
Namely, $\sigma_{k}$ and $\Gamma_{k}$ are the $k$-th elementary symmetric polynomial and the $k$-th G{\aa}rding cone on $\mathbb R^n$, respectively.
We can   extend the above definitions  to almost Hermitian manifold $(M,\chi,J)$ as follows:
\begin{equation*}
\sigma_{k}(\alpha)=\binom{n}{k}
\frac{\alpha^{k}\wedge\chi^{n-k}}{\chi^{n}}, \quad \text{for any $\alpha\in A^{1,1}(M)$},
\end{equation*}
and
\begin{equation*}
\Gamma_{k}(M,\chi) = \{ \alpha\in A^{1,1}(M): \text{$\sigma_{i}(\alpha)>0$ for $i=1,2,\cdots,k$} \}.
\end{equation*}
We say that $\alpha\in A^{1,1}(M)$ is $k$-positive if $\alpha\in\Gamma_{k}(M,\chi)$.
\section{Proof of Theorem \ref{main}}

 By Proposition \ref{equivalent},  (2) is equivalent to (3) in Theorem \ref{main}. We only need to prove the following theorem. In this section, our results are hold for both equation \eqref{nonlinear equation} and the deformed Hermitian-Yang-Mills equation \eqref{DHYM}.
 \begin{theorem}\label{existence}
 Suppose there exists $\underline{u}\in \mathcal{E}$ satisfying  
 \begin{equation}\label{subslope subsolution}
 \sigma<\min (F_{\infty}(\underline{u})-h).
 \end{equation}
 then
              we have a smooth solution $u\in \mathcal{E}$ solving the following equation
         \begin{equation}
            F(u)=h+\sigma.
        \end{equation}
 \end{theorem}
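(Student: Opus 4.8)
The plan is to use the method of continuity. Set up the one-parameter family of equations
\begin{equation*}
F(u_t) = t(h+\sigma) + (1-t)\big(F(\underline u) - \sigma_0\big), \qquad t\in[0,1],
\end{equation*}
or some variant arranged so that at $t=0$ the function $\underline u$ itself (or a small modification) is an exact solution, and at $t=1$ we recover the target equation $F(u)=h+\sigma$. Let $S\subset[0,1]$ be the set of $t$ for which a smooth solution $u_t\in\mathcal E$ exists. I would show $S$ is nonempty (it contains $0$ by construction), open, and closed, hence $S=[0,1]$, which gives the theorem.

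Openness is the standard implicit function theorem argument: the linearized operator $L=\sum_{i,j}F^{i\bar j}(e_i\bar e_j - [e_i,\bar e_j]^{(0,1)}(\cdot)) + (\text{first order terms from } Z)$ is elliptic by \eqref{elliptic}, and on a compact manifold one works modulo constants, using that $F$ is strictly increasing in each eigenvalue to invert $L$ on the appropriate Hölder spaces; the normalization $\sup_M u=0$ (or subtracting the average) removes the kernel. Closedness is where the real work lies: it reduces to a priori estimates. The $C^{2,\alpha}$ bound is exactly Theorem \ref{main estimate}, provided we know $\underline u$ remains a $\mathcal C$-subsolution for \emph{every} equation in the family — that is, $F_\infty(\underline u) > t(h+\sigma) + (1-t)(F(\underline u)-\sigma_0)$ pointwise for all $t$. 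This is guaranteed by hypothesis \eqref{subslope subsolution}, since the right-hand side is a convex combination of $h+\sigma$ (which is $<F_\infty(\underline u)$ by \eqref{subslope subsolution}) and $F(\underline u)-\sigma_0$ (which is $\le F(\underline u) < F_\infty(\underline u)$, choosing $\sigma_0\ge 0$); hence $\underline u$ is a $\mathcal C$-subsolution along the whole path by Proposition \ref{equivalent} (and the Remark after it for the dHYM case). It remains to get a uniform $C^0$ bound: here I would invoke the Alexandrov–Bakelman–Pucci / comparison-type estimate of the Blocki–Székelyhidi type adapted to almost Hermitian manifolds (as used in \cite{CHZ,HZ}), which bounds $\mathrm{osc}_M\, u_t$ in terms of the $\mathcal C$-subsolution $\underline u$ and the bound on the right-hand side — both uniform in $t$. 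Combined with the normalization, this gives $\|u_t\|_{C^0}\le C$ uniformly, feeding into Theorem \ref{main estimate} to close the argument.

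The main obstacle I anticipate is not any single estimate but the bookkeeping that makes the \emph{subsolution condition stable along the path}: one must arrange the endpoint data at $t=0$ so that (a) there is an honest starting solution and (b) the convex combination never pushes the right-hand side up to or past $F_\infty(\underline u)$. The strict inequality in \eqref{subslope subsolution} is precisely what creates room for this, and the modified definition of sub-slope (Definition \ref{subslope}) together with hypothesis (iii) — used in place of $\lim_{t\to\infty}f(t\lambda)=\infty$ — is what lets the argument run without assuming the right-hand side is positive. For the dHYM equation, which fails (iii), one uses instead that $f_\infty$ is bounded and the explicit description of $\mathcal C$-subsolutions in Proposition \ref{dhym subsolution}; the structure of the continuity argument is otherwise identical. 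Finally, uniqueness follows from a maximum principle argument using strict monotonicity of $f$: if $u,v$ both solve $F(\cdot)=h+\sigma$ with $\sup_M u=\sup_M v$, comparing at an interior extremum of $u-v$ and using concavity/ellipticity forces $u\equiv v$.
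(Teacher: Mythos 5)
Your big-picture strategy — continuity method, preservation of the $\mathcal{C}$-subsolution along the path, then feed into Theorem \ref{main estimate} for closedness — matches the paper, but the continuity path you propose is set up in a way that breaks the openness argument, and the fix forces a second change you haven't made.

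The core problem is that your path $F(u_t) = t(h+\sigma) + (1-t)\big(F(\underline u) - \sigma_0\big)$ has a \emph{fixed} right-hand side at each $t$. On a closed manifold the linearized operator $L_{u_{\hat t}}$ has Fredholm index $0$ with one-dimensional kernel (constants), hence one-dimensional cokernel; so the map $\phi \mapsto F(\bar u + \phi)$, restricted to $\sup\phi = 0$, lands in a codimension-one affine slice of $C^{\alpha}(M)$ and is \emph{not} surjective. The inverse function theorem therefore cannot produce a solution for an arbitrarily prescribed nearby right-hand side, no matter how you normalize away the kernel. The paper avoids this by treating the additive constant as a second unknown: the path is $F(\bar u + \phi_t) = h_t + c_t$ with the \emph{pair} $(\phi_t, c_t)$ unknown, and the linearized map $(\psi,c)\mapsto L_{u_{\hat t}}\psi - c$ is shown to be bijective using the Fredholm alternative against the positive cokernel element $\xi$. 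Once you reintroduce the free constant $c_t$, a second gap appears in your proposal: you start the path at $\underline u$, but the maximum-principle bound you can then prove is only $c_t \le t\,\max_M(F(\underline u)-h)$, and $\max_M(F(\underline u)-h)$ may be far larger than $\sigma$, so $h_t + c_t$ can overshoot $F_\infty(\underline u)$ and destroy the subsolution mid-path. The paper instead starts at a near-\emph{minimizer} $\bar u$ of the sub-slope functional, chosen so that $\bar c = \max_M(F(\bar u)-h)$ lies strictly between $\sigma$ and $\min_M(F_\infty(\underline u)-h)$; then Lemma \ref{ct} gives $c_t \le t\bar c$, which is exactly tight enough to keep $\underline u$ a subsolution for every $t$. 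Finally, because the endpoint equation reads $F(\bar u + \phi_1) = h + c_1$ for some $c_1$, the identification $c_1 = \sigma$ is not automatic — it is proved by a separate maximum-principle argument (the last lemma of Section 3), rather than being prescribable a priori as your path attempts. (A small additional note: Theorem \ref{main estimate} already delivers the full $C^{2,\alpha}$ bound from the subsolution, so the separate $C^0$/ABP step you sketch is unnecessary.)
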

 By definition of sub-slope $\sigma$ and \eqref{subslope subsolution}, there exists a function
  $\bar u\in \mathcal{E}$  satisfying
\begin{equation}\label{bar c}
    \sigma\leq\bar c=\max_{M}(f(\lambda(\omega_{\bar{u}}))-h)<\min(f_{\infty}(\lambda(\omega_{\underline{u}}))-h).
\end{equation}
Then there exists $\delta$ such that
\begin{equation}\label{delta}
    \bar c<\bar c+\delta\leq \min_{M}(f_{\infty}(\lambda(\omega_{\underline{u}}))-h).
\end{equation}
By letting $\bar h=F(\bar u)$. Then $\bar h\leq h+\bar c$. In conclusion, we have the following Lemma
\begin{lemma}\label{bar h}
   $ \bar h\leq h +\bar c$
\end{lemma}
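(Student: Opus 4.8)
The plan is simply to unwind the definitions of $\bar h$ and $\bar c$. Recall that in \eqref{bar c} the function $\bar u\in\mathcal{E}$ was chosen so that
\[
\bar c=\max_{M}\big(f(\lambda(\omega_{\bar u}))-h\big),
\]
and that $\bar h$ is \emph{defined} by $\bar h=F(\bar u)=f(\lambda(\omega_{\bar u}))$. Since $\bar c$ is, by construction, the maximum over all of $M$ of the function $f(\lambda(\omega_{\bar u}))-h$, one gets the pointwise bound $f(\lambda(\omega_{\bar u}))(x)-h(x)\leq\bar c$ for every $x\in M$. Rearranging this pointwise inequality yields $\bar h(x)=f(\lambda(\omega_{\bar u}))(x)\leq h(x)+\bar c$ at each point, which is precisely the asserted inequality $\bar h\leq h+\bar c$.

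There is no genuine obstacle: the content is entirely in the definitions, and the lemma is recorded only for convenient reference later. Its role is to pin down the starting point of the continuity path --- one solves $F(u)=\bar h$ with $u=\bar u$ --- while guaranteeing, via \eqref{delta} and Proposition \ref{equivalent}, that $\underline{u}$ remains a $\mathcal{C}$-subsolution for every right-hand side lying between $\bar h$ and $h+\sigma$, since all such right-hand sides are dominated by $h+\bar c<h+\min_{M}(f_{\infty}(\lambda(\omega_{\underline u}))-h)$. The only point worth checking in passing is that $\bar u\in\mathcal{E}$, so that $\lambda(\omega_{\bar u})\in\Gamma$ pointwise and $f(\lambda(\omega_{\bar u}))$ is well-defined; this is ensured by the way $\bar u$ was extracted from the definition of the sub-slope $\sigma=\inf_{u\in\mathcal{E}}\max_{M}(F(u)-h)$ as an infimum over $\mathcal{E}$ (see Definition \ref{subslope}).
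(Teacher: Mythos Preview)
Your proof is correct and is exactly the paper's argument: the paper merely records that $\bar h:=F(\bar u)$ and observes $\bar h\leq h+\bar c$ immediately from the definition $\bar c=\max_M(F(\bar u)-h)$, with no further justification. Your additional commentary about the lemma's role in the continuity method is accurate but goes beyond what the paper writes at this point.
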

We consider the family of equations
\begin{equation}\label{continuity equation}
    F(\bar u+\phi_t)=h_t+c_t, h_t=(1-t)\bar h+th, t\in [0,1],
\end{equation}
where $\bar u+\phi_t\in \mathcal{E},\sup\phi_t=0$ .
Let 
$$I=\{t\in [0,1]\,|\,\text{ there exists a pair $(\phi_t, c_t)$ solving \eqref{continuity equation} at }\, t \}.$$
It is obvious that $0\in I$. It suffices to prove $I$ is open and closed. First we prove the closeness of $I$.

\subsection{Closeness}
In the following, we prove the closeness. Now we prove the constant $c_t$ is bounded.
\begin{lemma}\label{ct}
    There exists $C>0$ such that for all $t\in I$,
    \begin{equation*}
        -C\leq c_t\leq t\bar c.
    \end{equation*}
\end{lemma}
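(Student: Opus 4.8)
The plan is to obtain the two bounds on $c_t$ by evaluating the continuity equation \eqref{continuity equation} at the maximum and minimum points of $\phi_t$, exploiting the monotonicity of $F$ in the sense of \eqref{second derive of F}–\eqref{F ii 1} together with Lemma \ref{bar h}. For the upper bound, let $x_0$ be a point where $\phi_t$ attains its maximum; since $\sup_M\phi_t=0$, we have $\phi_t(x_0)=0$, and at $x_0$ the form $\ddbar\phi_t$ is $\le 0$ while $\partial\phi_t=0$, so $\omega_{\bar u+\phi_t}(x_0)\le\omega_{\bar u}(x_0)$ as Hermitian forms. By ellipticity (concavity is not even needed here, just \eqref{elliptic}), $F$ is monotone increasing with respect to the partial order on Hermitian forms, hence
\[
c_t = F(\bar u+\phi_t)(x_0)-h_t(x_0) \le F(\bar u)(x_0)-h_t(x_0) = \bar h(x_0)-h_t(x_0).
\]
Now $h_t=(1-t)\bar h+th$, so $\bar h-h_t = t(\bar h-h)$, and by Lemma \ref{bar h} we have $\bar h-h\le\bar c$, giving $c_t\le t\bar c$ (using $t\ge 0$).

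For the lower bound, let $x_1$ be a point where $\phi_t$ attains its minimum; there $\ddbar\phi_t\ge 0$ and $\partial\phi_t=0$, so $\omega_{\bar u+\phi_t}(x_1)\ge\omega_{\bar u}(x_1)$, and monotonicity of $F$ yields
\[
c_t = F(\bar u+\phi_t)(x_1)-h_t(x_1) \ge F(\bar u)(x_1)-h_t(x_1) = \bar h(x_1)-h_t(x_1).
\]
Since $h_t$ is a convex combination of the two fixed smooth functions $\bar h$ and $h$, the quantity $\bar h-h_t$ is uniformly bounded below by $-\sup_M|\bar h-h|$, which is a constant $C$ independent of $t$. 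This establishes $-C\le c_t$ and completes the proof.

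I do not anticipate a genuine obstacle here: the argument is the standard maximum-principle trick for bounding the Lagrange-type constant in a continuity method, and the only inputs are the normalization $\sup_M\phi_t=0$, the ellipticity \eqref{elliptic} (which gives monotonicity of $F$ under $\omega_u\mapsto\omega_u+\ddbar\psi$ at a critical point of $\psi$), and Lemma \ref{bar h}. One small point to be careful about is that $\omega_{\bar u+\phi_t}=\omega_{\bar u}+\ddbar\phi_t+Z(\partial\phi_t)$ involves the gradient term $Z(\partial\phi_t)$; but at an interior extremum of $\phi_t$ one has $\partial\phi_t=0$, so $Z(\partial\phi_t)=0$ there and the comparison of forms is unaffected. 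The same remark shows the eigenvalues $\lambda(\omega_{\bar u+\phi_t})(x_0)\le\lambda(\omega_{\bar u})(x_0)$ componentwise after simultaneous diagonalization, so that one stays inside $\Gamma$ and $F$ is well-defined at the relevant points.
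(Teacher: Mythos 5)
The proposal is correct and takes essentially the same approach as the paper: both bound $c_t$ by applying the monotonicity of $F$ at the maximum and minimum points of $\phi_t$ and then invoking Lemma \ref{bar h} for the upper bound and the boundedness of $\bar h - h_t$ for the lower bound. Your additional remark that the gradient term $Z(\partial\phi_t)$ and the first-order contributions inside $\ddbar\phi_t$ vanish at an interior extremum is a correct and welcome clarification of why the maximum-principle comparison still works in the almost Hermitian setting with gradient terms.
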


\begin{proof}
Assume that $p_t\in M$ is the maximal point of $\phi_t$. By the maximum principle, we have, at the maximal point $p_t\in M$ of $\phi_t$,
\begin{equation*}
    (1-t)\bar h(p_t)+th(p_t)+c_t=F(\bar u+\phi_t)(p_t)\leq F(\bar u)(p_t)=\bar h(p_t).
\end{equation*}
Therefore by Lemma \ref{bar h}, $c_t\leq t(\bar h-h )(p_t)\leq t\bar c$.
At the minimal pint $q_t$ of $\phi_t$, we have
\begin{equation*}
    (1-t)\bar h(q_t)+th(q_t)+c_t=F(\bar u+\phi_t)(q_t)\geq F(\bar u)(q_t)=\bar h(q_t),
\end{equation*}
which implies $c_t\geq -C$.

\end{proof}

\begin{proposition}
    Let $\delta$ be the fixed constant in \eqref{delta} and $\underline{u}\in \mathcal{E}$ be the subsolution defined in Theorem \ref{existence}. Then for any $t\in \mathcal{T}$, we have
    \begin{equation}
        f_{\infty}(\lambda(\omega_{\underline{u}}))-h_t\geq c_t+\delta.
    \end{equation}
\end{proposition}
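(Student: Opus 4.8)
The statement is a purely elementary consequence of the three ingredients already established: Lemma~\ref{bar h} (which says $\bar h\le h+\bar c$), the inequality \eqref{delta} (which says $\bar c+\delta\le\min_M(f_\infty(\lambda(\omega_{\underline u}))-h)$), and Lemma~\ref{ct} (which says $c_t\le t\bar c$ for $t\in I=\mathcal{T}$). So the plan is simply to chain these together, the only point requiring a little care being the bookkeeping of the $t$-dependence of $h_t$.

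First I would dispose of the trivial case: if $f_\infty\equiv\infty$ on $\Gamma$ there is nothing to prove, so assume $f_\infty(\lambda)$ is finite for each $\lambda\in\Gamma$. Next, since $h_t=(1-t)\bar h+th$ is a convex combination with $t\in[0,1]$, and $\bar h\le h+\bar c$ by Lemma~\ref{bar h}, I would estimate
\begin{equation*}
h_t=(1-t)\bar h+th\le (1-t)(h+\bar c)+th=h+(1-t)\bar c ,
\end{equation*}
where the first inequality uses $1-t\ge 0$. This is the one place where $t\in[0,1]$ is genuinely used.

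Subtracting this from $f_\infty(\lambda(\omega_{\underline u}))$ and invoking the pointwise bound \eqref{delta} gives, at every point of $M$,
\begin{equation*}
f_\infty(\lambda(\omega_{\underline u}))-h_t\ \ge\ \big(f_\infty(\lambda(\omega_{\underline u}))-h\big)-(1-t)\bar c\ \ge\ (\bar c+\delta)-(1-t)\bar c\ =\ t\bar c+\delta .
\end{equation*}
Finally I would apply Lemma~\ref{ct}, which for $t\in\mathcal{T}$ yields $c_t\le t\bar c$, hence $t\bar c+\delta\ge c_t+\delta$, and combining the last two displays gives $f_\infty(\lambda(\omega_{\underline u}))-h_t\ge c_t+\delta$ as claimed.

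As for obstacles: there really is no analytic difficulty here — the statement is a formal inequality manipulation. The substantive inputs are Lemma~\ref{ct} (whose proof rests on the maximum principle applied at the extrema of $\phi_t$) and the definition of the sub-slope together with \eqref{bar c}–\eqref{delta}; once those are in place, the proposition follows in a few lines. The only thing to be careful about is not to lose the sign of $\bar c$ anywhere, which is why keeping the combination in the form $h+(1-t)\bar c$ (rather than, say, splitting into cases on the sign of $\bar c$) is the cleanest route.
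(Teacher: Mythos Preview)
Your proof is correct and follows essentially the same route as the paper: both use Lemma~\ref{bar h} to bound $h_t\le h+(1-t)\bar c$, then \eqref{delta} to get $f_\infty(\lambda(\omega_{\underline u}))-h_t\ge t\bar c+\delta$, and finally Lemma~\ref{ct} to conclude. Your presentation is in fact slightly cleaner than the paper's chain of inequalities.
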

\begin{proof}
By Lemma \ref{bar h}, we have
\begin{equation}
    -(1-t)\bar h\geq -(1-t)h-(1-t)\bar c.
\end{equation}
Then, by directly calculation, we have
\begin{equation}
    \begin{split}
        &f_{\infty}(\lambda(\omega_{\underline{u}}))-h_t-c_t\\
        =&f_{\infty}(\lambda(\omega_{\underline{u}}))-(1-t)\bar{h}-th-c_t\\
        \geq &\min_{M}(f_{\infty}(\lambda(\omega_{\underline{u}}))-h-(1-t)\bar c-c_t)\\
        \geq& -c_t-(1-t)\bar c+\delta+\max_{M}(F(\bar u)-h)\\
        =& -c_t-(1-t)\bar c+\bar c+\delta\\
        \geq& \delta.
    \end{split}
\end{equation}
Here in the second  inequality we used \eqref{delta} and in the last inequality we used Lemma \ref{ct}.
\end{proof}

Using Proposition  $\ref{equivalent}$, we immediately obtain 
\begin{proposition}\label{subslution t}
    $\underline{u}$ is a $\mathcal{C}$-subsolution of \eqref{continuity equation} for all $t\in I$.
\end{proposition}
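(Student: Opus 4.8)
The plan is to read the statement off directly from Proposition~\ref{equivalent} together with the pointwise bound established in the preceding proposition. Recall that, for an equation of the form $f(\lambda(\omega_u)) = \tilde h(x)$ satisfying (i)--(iii) (and, via the Remark following Proposition~\ref{equivalent}, equivalently via Proposition~\ref{dhym subsolution}, also for the deformed Hermitian--Yang--Mills equation), a smooth function is a $\mathcal{C}$-subsolution precisely when $f_\infty(\lambda(\omega_\cdot))(x) > \tilde h(x)$ at every point $x\in M$. So the whole task reduces to identifying the right-hand side of the continuity equation~\eqref{continuity equation} at parameter $t$, namely $h_t + c_t$, and verifying the corresponding strict inequality for $\underline u$.

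The preceding proposition supplies exactly this: for every $t\in I$,
\begin{equation*}
    f_\infty(\lambda(\omega_{\underline{u}})) - h_t - c_t \;\geq\; \delta \;>\; 0,
\end{equation*}
so that $f_\infty(\lambda(\omega_{\underline{u}}))(x) > h_t(x) + c_t$ for all $x\in M$. Applying Proposition~\ref{equivalent} with $\tilde h = h_t + c_t$ then yields that $\underline u$ is a $\mathcal{C}$-subsolution of $F(\bar u + \phi) = h_t + c_t$, i.e.\ of \eqref{continuity equation}, at that value of $t$. Since $t\in I$ was arbitrary, this proves the proposition.

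There is essentially no obstacle here: the substance of the claim has already been extracted as the estimate $f_\infty(\lambda(\omega_{\underline{u}})) - h_t \geq c_t + \delta$, and Proposition~\ref{equivalent} merely translates that inequality into the $\mathcal{C}$-subsolution property verbatim. The only point deserving a word of care is that Proposition~\ref{equivalent} is stated for equations satisfying (i)--(iii); for the dHYM equation one instead invokes the Remark after it (equivalently Proposition~\ref{dhym subsolution}), which is why the conclusion holds uniformly for both families of equations treated in this section.
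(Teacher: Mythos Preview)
Your proof is correct and matches the paper's approach exactly: the paper simply states that the proposition follows immediately from Proposition~\ref{equivalent} applied to the inequality $f_\infty(\lambda(\omega_{\underline u})) - h_t \geq c_t + \delta$ from the preceding proposition, which is precisely what you do. Your added remark about handling the dHYM case via the Remark after Proposition~\ref{equivalent} is also in line with the paper's conventions.
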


Using Proposition \ref{subslution t} and Theorem \ref{main estimate}, we have $\|\phi_t\|_{C^{2,\alpha}(M)}\leq C$ which implies $I$ is closed.

\subsection{Openness}
\begin{lemma}
    I is open.
\end{lemma}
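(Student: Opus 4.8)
The plan is to run the standard implicit function theorem argument at a solution $(\phi_{t_0}, c_{t_0})$ of \eqref{continuity equation}, the only wrinkle being that the pair $(\phi_t, c_t)$ carries an extra scalar unknown $c_t$ balanced by the normalization $\sup\phi_t = 0$; I will replace that normalization by $\int_M \phi_t\, dV = 0$ for the purpose of the openness argument (the two are interchangeable up to an additive constant, which can be absorbed since the equation \eqref{continuity equation} is not affected by adding a constant to $h_t + c_t$ — rather, one reads off $c_t$ and then re-normalizes $\sup \phi_t = 0$ afterward). So first I would set up Banach spaces: for $\alpha\in(0,1)$ and $k\geq 2$, let $B_1 = \{\psi \in C^{k,\alpha}(M) : \int_M \psi \, dV = 0\} \times \mathbb{R}$ and $B_2 = C^{k-2,\alpha}(M)$, and consider the map
\[
\Phi(\psi, c, t) = F(\bar u + \phi_{t_0} + \psi) - h_t - c_{t_0} - c,
\]
defined and smooth (as a map of Banach spaces) on a neighborhood of $(0, 0, t_0)$, using that $\bar u + \phi_{t_0}$ lies in $\mathcal{E}$ and $\mathcal{E}$ is open in $C^2$.

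Next I would compute the linearization of $\Phi$ in $(\psi, c)$ at the solution. It is
\[
D\Phi(\psi, c) = L\psi - c,
\]
where $L$ is the linearized operator $L = \sum_{i,j} F^{i\bar j}(e_i\bar e_j - [e_i,\bar e_j]^{(0,1)}) + (\text{first-order terms from } Z(\partial u))$, evaluated at $\omega_{\bar u + \phi_{t_0}}$. By \eqref{elliptic}, $F^{i\bar j}$ is positive definite, so $L$ is a second-order linear elliptic operator with no zeroth-order term; on a compact manifold such an operator, by the strong maximum principle, has kernel consisting only of the constants, and the formal adjoint $L^*$ likewise has one-dimensional kernel spanned by a positive function $\rho$ (the adjoint has no zeroth order term either after integration by parts — or more precisely its kernel is one-dimensional and positive by Krein–Rutman / the Fredholm alternative). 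The key linear-algebra point is then that the map $(\psi, c)\mapsto L\psi - c$ from $B_1 \to B_2$ is an isomorphism: injectivity follows because $L\psi = c$ forces, by integrating against $\rho$, $c \int_M \rho\, dV = \int_M (L\psi)\rho \, dV = \int_M \psi \,(L^*\rho)\, dV = 0$, hence $c = 0$ and then $\psi$ is constant, hence $\psi = 0$ by the integral normalization; surjectivity follows because for given $w\in C^{k-2,\alpha}$ one can choose $c$ so that $w + c$ is $L^2$-orthogonal to $\ker L^* = \mathbb{R}\rho$, then solve $L\psi = w + c$ by the Fredholm alternative and normalize $\psi$.

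With the linearization an isomorphism, the implicit function theorem in Banach spaces gives, for $t$ near $t_0$, a unique $(\psi(t), c(t))$ near $(0,0)$ solving $\Phi(\psi(t),c(t),t) = 0$; setting $\phi_t = \phi_{t_0} + \psi(t)$ (re-normalized to $\sup\phi_t = 0$ by adding the appropriate constant to $\phi_t$ and to $c_t$) and $c_t = c_{t_0} + c(t) + (\text{that constant})$ produces a solution of \eqref{continuity equation} for $t$ near $t_0$, and a standard bootstrap using Theorem \ref{main estimate} and Schauder theory upgrades it to a smooth solution in $\mathcal{E}$. Hence $I$ is open. The main obstacle — really the only nontrivial point — is the Fredholm/maximum-principle analysis of $L$ together with the bookkeeping of the extra unknown $c_t$: one must be careful that $L$ genuinely has no zeroth-order term so that its kernel is exactly the constants, and that $c_t$ can always be chosen to project out the cokernel; the ellipticity needed here is exactly \eqref{elliptic}, and concavity of $f$ is not used in this step.
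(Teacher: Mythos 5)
Your proof follows essentially the same route as the paper: linearize at a solution, identify $\ker L$ with the constants via the maximum principle (using that $L$ has no zeroth-order term), invoke the Fredholm alternative and the positive function $\rho$ (the paper's $\xi$) spanning $\ker L^{*}$ to show $(\psi,c)\mapsto L\psi-c$ is an isomorphism, and conclude by the implicit (equivalently, inverse) function theorem with elliptic bootstrap. The only structural difference is the normalization of the function space --- the paper takes $\int_M \psi\,\xi\,\mathrm{dvol}=0$, you take $\int_M\psi\,dV=0$ --- and both work since the normalization only fixes the free constant in $\psi$. One small bookkeeping slip: in the re-normalization to $\sup\phi_t=0$, the constant $c_t$ should not shift, since $F(\omega_u)$ is unchanged when a constant is added to $u$; so $c_t=c_{t_0}+c(t)$ without the extra term.
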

\begin{proof}
   Let  $(\phi_{\hat{t}},c_{\hat{t}})$ be the solution of   \eqref{continuity equation} at $\hat{t}$. We only need to to show that there exists a pair $(\phi_{t},c_{t})\in C^{\infty}(M)\times\mathbb{R}$ solving \eqref{continuity equation} for $t$ which is close to $\hat{t}$.  	

SUpposethat the linearized operator of \eqref{continuity equation} at $\phi_{\hat{t}}$ is $L_{u_{\hat{t}}}(\psi)$.
 Using the maximum principle,
\begin{equation}\label{ker L1}
\mathrm{Ker}(L_{u_{\hat{t}}})=\{\text{constant}\}.
\end{equation}
Suppose  $L_{\phi_{\hat{t}}}^{*}$ are the $L^{2}$-adjoint operator of $L_{\phi_{\hat{t}}}$ with respect to the volume form
\[
\mathrm{dvol}=\chi^{n}.
\]
Since the operator $L_{u_{\hat{t}}}$ is elliptic,  the index is zero.
By the Fredholm alternative, there is a non-negative function $\xi$ such that
\begin{equation}\label{ker L*}
\mathrm{Ker}(L_{u_{\hat{t}}}^{*})=\text{Span}\big\{\xi\big\}.
\end{equation}
 From the strong maximum principle, we have $\xi>0$. Up to a constant, we may assume
\[
\int_{M}\xi\, \mathrm{dvol}=1.
\]

To prove the openness of $I$, we define the space  by
\[
\mathcal{U}^{2,\alpha}:=\Big\{\phi\in C^{2,\alpha}(M):
\text{$\omega_{\phi}\in\Gamma(\chi)$ and
$\int_{M}\phi\cdot\xi \, \mathrm{dvol}=0$}
\Big\}.
\]
Then the tangent space of $\mathcal{U}^{2,\alpha}$ at $\phi_{\hat{t}}$ is given by
\[
T_{u_{\hat{t}}}\,\mathcal{U}^{2,\alpha}
:=\Big\{\psi\in C^{2,\alpha}(M): \int_{M}\psi\cdot\xi \, \mathrm{dvol}=0\Big\}.
\]
Let us consider the map
\begin{equation*}
G(\phi,c)=F(\phi)-c,
\end{equation*}
which maps $\mathcal{U}^{2,\alpha}\times\mathbb{R}$ to $C^{\alpha}(M)$.
It is clear that the linearized operator of $G$ at $(u_{\hat{t}},\hat{t})$ is given by
\begin{equation}\label{linear operator}
(L_{u_{\hat{t}}}-c): T_{u_{\hat{t}}}\,\mathcal{U}^{2,\alpha}\times \mathbb{R}\longrightarrow  C^{\alpha}(M).
\end{equation}
On the one hand, for any $h\in C^{\alpha}(M)$, there exists a constant $c$ such that
\[
\int_{M}(h+c)\cdot\xi \, \text{dvol}_{k}=0.
\]
By \eqref{ker L*} and the Fredholm alternative, we can find a real function $\psi$ on $M$ such that
\[
L_{u_{\hat{t}}}(\psi)=h+c
\]
Hence, the map $L_{u_{\hat{t}}}-c$ is surjective. On the other hand, suppose that there are two pairs $(\psi_{1},c_{1}),(\psi_{2},c_{2})\in T_{u_{\hat{t}}}\,\mathcal{U}^{2,\alpha}\times \mathbb{R}$ such that
\[
L_{u_{\hat{t}}}(\psi_{1})-c_{1}
= L_{u_{\hat{t}}}(\psi_{2})-c_{2}.
\]
It then follows that
\[
L_{u_{\hat{t}}}(\psi_{1}-\psi_{2}) = c_{1}-c_{2}.
\]
Applying the maximum principle twice, we obtain $c_{1}=c_{2}$ and $\psi_{1}=\psi_{2}$. Then $L_{u_{\hat{t}}}-c$ is injective.

Now we conclude that $L_{u_{\hat{t}}}-c$ is bijective. By the inverse function theorem, when $t$ is close to $\hat{t}$, there exists a pair $(u_{t},c_{t})\in\mathcal{U}^{2,\alpha}\times\mathbb{R}$ satisfying
\begin{equation*}
 G(u_{t},c_{t}) = th+(1-t)h_{0}.
\end{equation*}
The standard elliptic theory shows that $u_{t}\in C^{\infty}(M)$. Then $S$ is open.
\end{proof}

In conclusion, we prove there exists a solution for equation \eqref{nonlinear equation} if we have a $\mathcal{C}$-subsolution. Now we prove the constant $c_t$ on the right hand of equation is equal to the sub-slope.

\begin{lemma}
    Suppose $u\in \mathcal{E}$ solves equations \eqref{nonlinear equation}. Then 
     $F(u)=h+\sigma$ where $\sigma$ is the sup-slope for \eqref{nonlinear equation}.
\end{lemma}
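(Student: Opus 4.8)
The plan is to pin down the constant on the right-hand side by a comparison argument at an extremum. The continuity method just completed produces, at $t=1$, a pair $(u,c)$ with $u\in\mathcal{E}$ and $F(\omega_{u})=h+c$ for some constant $c$; what must be shown is that $c$ equals the sub-slope $\sigma$ of Definition \ref{subslope}. One inequality is immediate: since $u\in\mathcal{E}$ and $F(\omega_{u})-h\equiv c$ is constant, we have $\max_{M}(F(\omega_{u})-h)=c$, and therefore $\sigma=\inf_{v\in\mathcal{E}}\max_{M}(F(\omega_{v})-h)\le c$. The same reasoning shows more generally that for \emph{any} $u\in\mathcal{E}$ solving $F(\omega_{u})=h+c$ with $c$ a constant we have $\sigma\le c$, so the whole content is the reverse inequality.

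For $c\le\sigma$ I would argue by contradiction. Suppose $c>\sigma$. By definition of the infimum there is $v\in\mathcal{E}$ with $\max_{M}(F(\omega_{v})-h)<c$, hence $F(\omega_{v})<h+c=F(\omega_{u})$ at every point of $M$. Let $x_{0}\in M$ be a maximum point of $w:=u-v$. Then $dw(x_{0})=0$, so $\partial w(x_{0})=0$, and since $Z$ depends linearly on $\partial(\cdot)$ we get $Z(\partial u)(x_{0})=Z(\partial v)(x_{0})$. Moreover, by the second-order maximum principle the real $(1,1)$-form $\ddbar w=\frac{1}{2}(dJdw)^{(1,1)}$ is negative semidefinite at $x_{0}$: in the local unitary frame of Section \ref{preliminaries} this is the statement that the Hermitian matrix $\big((\partial\overline{\partial} w)(e_{i},\overline{e}_{j})(x_{0})\big)=\big((e_{i}\overline{e}_{j}w)(x_{0})\big)$ is $\le 0$, the bracket term $[e_{i},\overline{e}_{j}]^{(0,1)}(w)$ being first order and hence vanishing at the critical point $x_{0}$, while the remaining second-order part is controlled by the (negative semidefinite) real Hessian of $w$ at $x_{0}$. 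Combining these facts, the Hermitian matrix representing $\omega_{u}$ at $x_{0}$ is $\le$ the one representing $\omega_{v}$, i.e. $\omega_{u}(x_{0})\le\omega_{v}(x_{0})$ as real $(1,1)$-forms.

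It remains to deduce $F(\omega_{u})(x_{0})\le F(\omega_{v})(x_{0})$, which contradicts $F(\omega_{v})(x_{0})<F(\omega_{u})(x_{0})$. Write $\mu=\lambda(\omega_{u})(x_{0})$ and $\nu=\lambda(\omega_{v})(x_{0})$; both lie in $\Gamma$ since $u,v\in\mathcal{E}$, and $\omega_{u}(x_{0})\le\omega_{v}(x_{0})$ forces $\mu_{i}\le\nu_{i}$ for every $i$ by the min--max characterization of eigenvalues (with the ordering convention \eqref{mu order}). Because $\Gamma$ is convex, the segment $\mu_{s}=(1-s)\mu+s\nu$, $s\in[0,1]$, stays in $\Gamma$, and $\frac{d}{ds}f(\mu_{s})=\sum_{i}f_{i}(\mu_{s})(\nu_{i}-\mu_{i})\ge 0$ by hypothesis (i); hence $f(\mu)\le f(\nu)$, which is the contradiction. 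Thus $c\le\sigma$, so $c=\sigma$. The same argument applies verbatim to the deformed Hermitian--Yang--Mills equation (there $\Gamma=\mathbb{R}^{n}$ and $f=\sum_{i}\arctan\lambda_{i}$ is increasing in each variable), and it also shows that the constant $c$ solving \eqref{continuity equation} at $t=1$ is uniquely determined, which is what justifies the explicit formulas for $\sigma$ in Theorems \ref{complex Hessian equation} and \ref{dHYM}. I expect the only genuinely delicate point to be the second-order maximum principle on the almost Hermitian manifold that yields $\omega_{u}(x_{0})\le\omega_{v}(x_{0})$ (handling the $(0,1)$-part of the Lie bracket correctly at the critical point); once that step is secured, the monotonicity of $F$ on $\Gamma$ and the resulting contradiction are routine.
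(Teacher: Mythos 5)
Your proof is correct and follows essentially the same strategy as the paper's: establish $\sigma\le c$ directly from the definition of the infimum, then obtain the reverse inequality by contradiction via a maximum-principle comparison between $u$ and a competitor $u'$ with $\max_M(F(u')-h)<c$. The paper evaluates at the minimum of $u'-u$ rather than the maximum of $u-v$, which is the same point; your version merely spells out the almost-Hermitian details (vanishing of the $Z$-term and of $[e_i,\overline{e}_j]^{(0,1)}w$ at the critical point, monotonicity of $f$ along the segment in $\Gamma$) that the paper leaves implicit in the phrase ``applying the maximum principle.''
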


\begin{proof}
    By definition of $u$ and $\sigma$,
    \begin{equation*}
        \sigma\leq \max_{M}(F(u)-h)=(F(u)-h).
    \end{equation*}
    Suppose $\sigma<(F(u)-h)$. Then there exists $u'\in \mathcal{E}$ such that
    \begin{equation*}
        \sigma\leq \max_{M}(F(u')-h)< (F(u)-h).
    \end{equation*}
    Applying the maximum principle at the minimum point of $\phi=u'-u$, we have
    \begin{equation*}
        F(u)-h\leq F(u')-h\leq \max(F(u')-h)<(F(u)-h).
    \end{equation*}
    It is a contraction.
\end{proof}

\end{document}